\newcommand\R{{\mathbb{R}}}
\newcommand\C{{\mathbb{C}}}
\newcommand\D{{\mathbf{D}}}
\renewcommand\P{{\mathbf{P}}}
\newcommand\E{{\mathbf{E}}}
\renewcommand\Im{{\operatorname{Im}}}
\renewcommand\Re{{\operatorname{Re}}}
\newcommand\eps{{\varepsilon}}
\newcommand\tr{\operatorname{trace}}
\renewcommand\Pr{{\mathbf P }}
\theoremstyle{plain}
  \newtheorem{theorem}{Theorem}[section]
  \newtheorem{proposition}[theorem]{Proposition}
  \newtheorem{lemma}[theorem]{Lemma}
  \newtheorem{corollary}[theorem]{Corollary}
\theoremstyle{remark}
  \newtheorem{remark}[theorem]{Remark}
\theoremstyle{definition}
  \newtheorem{definition}[theorem]{Definition}
\begin{document}

\title[Outliners for bounded rank perturbations]{Outliers in the spectrum of iid matrices with bounded rank perturbations}

\author{Terence Tao}
\address{Department of Mathematics, UCLA, Los Angeles CA 90095-1555}
\email{tao@math.ucla.edu}
\thanks{T. Tao is is supported by a grant from the MacArthur Foundation, by NSF grant DMS-0649473, and by the NSF Waterman award.}

\begin{abstract}  It is known that if one perturbs a large iid random matrix by a bounded rank error, then the majority of the eigenvalues will remain distributed according to the circular law.  However, the bounded rank perturbation may also create one or more outlier eigenvalues.  We show that if the perturbation is small, then the outlier eigenvalues are created next to the outlier eigenvalues of the bounded rank perturbation; but if the perturbation is large, then many more outliers can be created, and their law is governed by the zeroes of a random Laurent series with Gaussian coefficients.  On the other hand, these outliers may be eliminated by enforcing a row sum condition on the final matrix.
\end{abstract}

\maketitle

\setcounter{tocdepth}{2}

\section{Introduction}

This paper is concerned with the study of outliers of the \emph{circular law} for iid random matrices and its variants.  To recall this law, we make some definitions:

\begin{definition}[iid random matrix]  An \emph{iid random matrix} is an $n \times n$ random matrix $X_n = (x_{ij})_{1 \leq i,j \leq n}$ (or more precisely, a nested sequence $X_1, X_2, \ldots$ of such matrices) whose entries $x_{ij}$ for $i,j \geq 1$ are independent identically distributed complex entries, which we normalise to have mean zero and variance one.  We say that such a matrix has \emph{atom distribution} $x$ if all the $x_{ij}$ have distribution $x$, thus $\E x = 0$ and $\E |x|^2 = 1$.
\end{definition}

\begin{definition}[ESD]  Given an $n \times n$ complex matrix $A_n$ (not necessarily Hermitian or normal), we define the \emph{empirical spectral distribution} $\mu_{A_n}$ of $A_n$ to be the probability measure
$$ \mu_{A_n} := \frac{1}{n} \sum_{j=1}^n \delta_{\lambda_j}$$
where $\lambda_j = \lambda_j(A_n)$ for $j=1,\ldots,n$ are the eigenvalues of $A_n$ (counting multiplicity, and ordered arbitrarily).

If $A_n$ is a random $n \times n$ complex matrix (so that $\mu_{A_n}$ is also random), we say that $\mu_{A_n}$ converges \emph{in probability} (resp. \emph{almost surely}) to another (Borel) probability measure $\mu$ on the complex plane $\C$ if for every smooth, compactly supported function $F: \C \to \C$, $\int_\C F\ d\mu_{A_n}$ converges in probability (resp. almost surely) to $\int_\C F\ d\mu$.
\end{definition}

The following theorem is the culmination of the work of many authors \cite{gin}, \cite{meh}, \cite{girko}, \cite{bai-first}, \cite{bai}, \cite{gotze}, \cite{twenty}, \cite{PZ}, \cite{TV-circular}, \cite{gt2}, \cite{tv-circular2}:

\begin{theorem}[Circular law for iid matrices]\label{thi}  Let $X_n$ be an iid random matrix.  Then $\mu_{\frac{1}{\sqrt{n}} X_n}$ converges almost surely (and hence also in probability) to the circular measure $\mu_c$, where $d\mu_c := \frac{1}{\pi} 1_{|z| \leq 1}\ dz$.
\end{theorem}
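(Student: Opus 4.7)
The plan is to follow Girko's Hermitization strategy, reducing the study of the (random, complex, non-normal) eigenvalues of $\frac{1}{\sqrt{n}} X_n$ to the much better behaved singular values of the Hermitized matrix $M_n(z) := \frac{1}{\sqrt{n}} X_n - zI$. Starting from the identity $\log|w| = \frac{1}{2}\log|w|^2$ and Green's theorem applied to the subharmonic function $\log|w - \lambda|$, for any test function $F \in C_c^\infty(\C)$ one has
$$\int_\C F\, d\mu_{\frac{1}{\sqrt{n}} X_n} \;=\; \frac{1}{2\pi n}\int_\C \Delta F(z)\, \log|\det M_n(z)|\, dz \;=\; \frac{1}{2\pi}\int_\C \Delta F(z)\, \int_0^\infty \log s\, d\mu_{|M_n(z)|}(s)\, dz,$$
where $\mu_{|M_n(z)|}$ is the empirical distribution of the singular values of $M_n(z)$. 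Thus it suffices to show that for Lebesgue almost every $z$, the logarithmic potential $\int_0^\infty \log s\, d\mu_{|M_n(z)|}(s)$ converges (in probability, then almost surely) to the logarithmic potential of a deterministic measure $\nu_z$ whose resulting double integral reproduces $\int_\C F\, d\mu_c$.

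The first step is to identify $\nu_z$ and establish weak convergence of $\mu_{|M_n(z)|}$ to it. Since $M_n(z) M_n(z)^*$ is a Hermitian random matrix (a shifted sample-covariance-type model), this is a standard application of the Stieltjes transform / resolvent method: Schur complement identities applied to the resolvent of the $2n \times 2n$ Hermitian dilation of $M_n(z)$ produce a system of fixed point equations, whose solution yields an explicit Mar\v{c}enko--Pastur-type formula for $\nu_z$. A direct computation then checks that $\frac{1}{2\pi}\int \Delta F(z)\, U_{\nu_z}(0)\, dz = \int F\, d\mu_c$, where $U_{\nu_z}$ denotes the log-potential.

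The central obstacle is that weak convergence of $\mu_{|M_n(z)|}$ to $\nu_z$ does \emph{not} imply convergence of the log-potential, since $\log s$ is unbounded near $s = 0$. One therefore needs quantitative control on the small singular values of $M_n(z)$: a polynomial lower bound $\sigma_n(M_n(z)) \geq n^{-C}$ with probability $1 - o(1)$, together with a bound on the number of singular values below $n^{-\alpha}$. For absolutely continuous entries this is routine (via inverse second moment identities), but for general atom distributions (in particular, discrete ones such as Bernoulli) it requires Littlewood--Offord-type anti-concentration inequalities estimating the distance from a random vector in $\C^n$ to a random hyperplane, as developed in the series of papers \cite{TV-circular}, \cite{gt2}, \cite{tv-circular2} cited above. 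This is the step that dominates the technical difficulty.

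Once the smallest singular value bound is granted, $\log s$ becomes uniformly integrable against $\mu_{|M_n(z)|}$ for each fixed $z$, upgrading the weak convergence to convergence in probability of the log-potential. A Fubini argument in the Hermitization identity, using that the smallest singular value bound is uniform in $z$ on the support of $F$ to produce an integrable dominating function, then yields convergence in probability of $\int F\, d\mu_{\frac{1}{\sqrt{n}} X_n}$ to $\int F\, d\mu_c$. Finally, the almost sure statement is obtained by truncating the entries of $X_n$ at a mild threshold (using $\E|x|^2 = 1$) and applying a concentration-of-measure bound along the rows (e.g.\ Azuma--Hoeffding or McDiarmid's inequality) to show that the random variable $\int F\, d\mu_{\frac{1}{\sqrt{n}} X_n}$ concentrates around its mean at a rate whose deviations are summable in $n$, so that Borel--Cantelli promotes convergence in probability to almost sure convergence.
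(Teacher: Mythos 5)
The paper does not prove this theorem; it is quoted verbatim from \cite[Theorem 1.15]{tv-circular2}, with the remark that the proof rests on the long chain of partial results in \cite{gin}, \cite{meh}, \cite{girko}, \cite{bai-first}, \cite{bai}, \cite{gotze}, \cite{twenty}, \cite{PZ}, \cite{TV-circular}, \cite{gt2}. Your sketch is therefore being compared against the cited literature rather than an in-paper argument, and as such it is a sound outline of Girko's Hermitization strategy, which is indeed the correct skeleton.

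Two points are worth flagging. First, the route actually taken in \cite{tv-circular2} is not a direct Stieltjes-transform computation of the limiting singular value law $\nu_z$ for each atom distribution; it is a \emph{replacement principle} (a comparison/universality lemma in the spirit of Lindeberg swapping) that reduces the general iid case to the complex Gaussian case, where the circular law was already known from Ginibre and Mehta. This is a different division of labour from what you propose: you take the ``compute $\nu_z$ and check the potential'' road (closer in spirit to Bai \cite{bai-first} and G\"otze--Tikhomirov \cite{gt2}), while \cite{tv-circular2} packages the entire weak-convergence step into a black-box comparison with Gaussians, so the Mar\v{c}enko--Pastur-type computation never has to be redone. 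Both roads are legitimate; the replacement principle buys universality in the atom distribution with less resolvent analysis, at the cost of needing the Gaussian endpoint as a known anchor. Second, a small imprecision in your write-up: the lower bound on the smallest singular value $\sigma_n(M_n(z)) \gg n^{-C}$ alone does \emph{not} yield uniform integrability of $\log s$; one also needs a polynomial count on the number of singular values in dyadic windows $[n^{-\alpha}, 2n^{-\alpha}]$ (you do mention this in the preceding paragraph, so this is a presentation issue rather than a gap, but the sentence ``once the smallest singular value bound is granted, $\log s$ becomes uniformly integrable'' should be amended to credit the intermediate-singular-value estimate as well).
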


The result as stated is \cite[Theorem 1.15]{tv-circular2}, but this result is based on a large number of partial results (in which more hypotheses are placed on the atom distribution $x$) which are proven in the previously cited papers.

The circular law implies in particular that the spectral radius
$$ \rho( \frac{1}{\sqrt{n}} X_n ) = \lim_{k \to \infty} \left\| (\frac{1}{\sqrt{n}} X_n)^k \right\|_{op}^{1/k} = \sup_{1 \leq j \leq n} \left|\frac{1}{\sqrt{n}} \lambda_j(X_n)\right|$$
is at least $1-o(1)$ almost surely, where $o(1)$ goes to zero as $n \to \infty$.  When the atom distribution $x$ has finite fourth moment, we in fact have an asymptotic for the spectral radius:

\begin{theorem}[No outliers for iid matrices]\label{noout}  Let $X_n$ be an iid random matrix whose atom distribution $x$ has finite fourth moment: $\E |x|^4 < \infty$.  Then $\rho(\frac{1}{\sqrt{n}} X_n)$ converges to $1$ almost surely (and hence also in probability) as $n \to \infty$.  In fact, for any finite $m \geq 1$, $\| (\frac{1}{\sqrt{n}} X_n)^m \|_{op}$ converges to $m+1$ almost surely (and hence also in probability) as $n \to \infty$.

Furthermore, if all moments of $x$ are finite, one has the tail bound
$$ \left\| (\frac{1}{\sqrt{n}} X_n)^m \right\|_{op} \leq m+1+\eps $$
with overwhelming probability\footnote{We say that an event $E_n$ depending on $n$ occurs with \emph{overwhelming probability} if for every $A > 0$ there exists $C>0$ such that $\P( E_n ) \geq 1 - C n^{-A}$ for all $n$.} for each $\eps > 0$.
\end{theorem}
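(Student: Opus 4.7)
The lower bound $\rho(X_n/\sqrt{n}) \geq 1-o(1)$ follows at once from the circular law, since $\mu_c$ has support reaching to the unit circle.  For the upper bound the plan is to exploit the elementary inequality $\rho(A) \leq \|A^m\|_{op}^{1/m}$, valid for every $m \geq 1$: once the operator-norm bound $\|(X_n/\sqrt n)^m\|_{op} \leq m+1+o(1)$ is known, raising to the $1/m$-th power and sending first $n \to \infty$ and then $m \to \infty$ (using $(m+1)^{1/m} \to 1$) forces $\rho \to 1$.  Thus the whole theorem reduces to the quantitative operator-norm statement, and that is where the work lies.

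I would prove the operator-norm bound by the moment method.  Setting $B_n := (X_n/\sqrt n)^m$ and noting that $B_n B_n^*$ is Hermitian positive semidefinite,
\[
  \|B_n\|_{op}^{2k} = \|B_n B_n^*\|_{op}^k \leq \tr\bigl((B_n B_n^*)^k\bigr) = n^{-mk}\,\tr\bigl((X_n^m (X_n^*)^m)^k\bigr).
\]
Expanding the trace produces a sum over closed walks of length $2mk$ on $\{1,\ldots,n\}$, in which blocks of $m$ consecutive $X$-steps alternate with blocks of $m$ consecutive $X^*$-steps for a total of $k$ blocks of each kind; the mean-zero iid hypothesis forces every directed edge of a contributing walk to be traversed at least twice.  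Standard bookkeeping in the Wigner/Bai--Yin/Marchenko--Pastur spirit identifies the leading contribution as the ``tree-like'' walks (each edge traversed exactly twice), which are in bijection with a Fuss--Catalan-type class of non-crossing pair partitions adapted to the alternation pattern; together with the $n^{mk+1}$ factor from choosing free vertex labels one obtains
\[
  \E\,\tr\bigl((X_n^m (X_n^*)^m)^k\bigr) \leq n^{mk+1}\,(m+1)^{2k}\,(1+o(1))^k,
\]
where the elementary estimate $(m+1)^{m+1}/m^m \leq (m+1)^2$ has been used to replace the sharp Fuss--Catalan growth rate by the simpler constant $(m+1)^2$.  Markov's inequality then gives
\[
  \P\bigl(\|B_n\|_{op} > m+1+\eps\bigr) \leq n\,\Bigl(1 - \tfrac{\eps}{m+1+\eps}\Bigr)^{2k}\,(1+o(1))^k,
\]
and taking $k = k(n) \to \infty$ at an appropriate rate (e.g.\ $k \sim \log^2 n$ for convergence in probability, or $k \sim n^{c}$ for overwhelming probability when all moments of $x$ exist) closes the tail.

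The reduction from the finite-fourth-moment case to the bounded-entries case would be the usual truncation: decompose $X_{ij} = X_{ij}^{\leq M} + X_{ij}^{>M}$ at a threshold $M = M(n)$ growing slowly with $n$, recenter and rescale the truncated part to restore mean zero and unit variance (with only $o(1)$ corrections), and absorb the discarded tail using $\E|x|^4 < \infty$, which is precisely what controls the operator norm of the tail matrix, as in the original Bai--Yin argument.  The hard part will be the combinatorial walk estimate: one must show that non-tree walks contribute only a multiplicative factor of $(1+o(1))^k$ relative to the tree walks, while still permitting $k$ to grow polynomially in $n$.  Each non-tree ``defect'' trades a power of $n$ for a higher moment $\E|X_{ij}|^r$ with $r > 2$, and one needs these exponents, together with the growth of the truncated moments in $M$, to balance so that the per-defect error is $o(1)$ uniformly in $k$.
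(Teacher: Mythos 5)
The paper itself does not give a proof of Theorem \ref{noout}; it simply cites Bai--Yin, Geman, and \cite{BY}, noting that the result follows from ``a routine application of the truncation method and the moment method,'' and offers Talagrand concentration as an alternative route to the tail bound after truncation.  Your proposal is therefore in the same spirit as the intended references (moment method $+$ truncation), and the overall architecture --- reduce $\rho$ to $\|\cdot\|_{op}^{1/m}$, bound $\|B_n\|_{op}^{2k}$ by $\tr((B_nB_n^*)^k)$, expand into walks, truncate and recenter --- is the right one.

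However, there is a concrete combinatorial error in the way you identify the leading order.  You invoke the Fuss--Catalan count $(m+1)^{m+1}/m^m$ (for the squared singular values) and then ``relax'' it to $(m+1)^2$.  But Fuss--Catalan is the answer for the squared singular values of a \emph{product of $m$ independent} iid matrices; for the \emph{$m$-th power of a single} matrix the matching structure is richer (an entry $x_{ij}$ arising in one $X$-block may pair with the same $x_{ij}$ arising in a different $X$- or $X^*$-block, which is forbidden when the factors are independent), and the correct limiting edge for the singular values is exactly $m+1$, so the sharp coefficient in the trace asymptotic is $(m+1)^{2k}$ on the nose.  Put differently, for $m \geq 2$ your claimed ``sharp'' Fuss--Catalan rate $(m+1)^{m+1}/m^m$ is \emph{strictly smaller} than $(m+1)^2$, so it would contradict the almost-sure limit $m+1$ asserted in the theorem; the inequality $(m+1)^{m+1}/m^m \leq (m+1)^2$, while true, papers over the fact that the underlying count is wrong.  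You need the walk count adapted to repeated-matrix matchings (as in Bai--Yin \cite{baiyin}), not the independent-product count.  Two further gaps worth flagging: (i) as written, the argument gives only the upper bound $\|(X_n/\sqrt n)^m\|_{op} \leq m+1+o(1)$, not the two-sided convergence claimed in the statement; (ii) for the ``overwhelming probability'' bound, letting $k$ grow polynomially in $n$ requires the non-tree defect errors to be controlled uniformly in $k$ with an explicit rate, which is delicate --- the paper's suggested route of first truncating to bounded entries and then applying a concentration inequality (Talagrand) to the Lipschitz map $X_n \mapsto \|(X_n/\sqrt n)^m\|_{op}$ is cleaner for that part.
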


\begin{proof} This follows from what is by now a routine application of the truncation method and the moment method; see \cite{baiyin}, \cite{geman}, or \cite[Theorem 5.17]{BY}. We remark that the tail bound can also be deduced from the main result using the Talagrand concentration inequality (after first truncating to the case when $x$ is bounded); see \cite{gz}, \cite{akv}, \cite{meckes}.  The precise expression $m+1$ is not important for our arguments here; any quantity that was subexponential in $m$ would have sufficed.
\end{proof}

Informally, Theorem \ref{noout} asserts that when the fourth moment is finite, there are no significant \emph{outliers} to the circular law: with probability\footnote{The asymptotic notation $O(), o()$ that we use here will be defined in Section \ref{notation-sec}.} $1-o(1)$, all of the eigenvalues of the matrix $\frac{1}{\sqrt{n}} X_n$ lie within $o(1)$ of the support $\{ z \in \C: |z| \leq 1 \}$ of the circular law.  The fourth moment condition here is necessary for the second conclusion of Theorem \ref{noout} (see \cite{BY}), and it is very likely that it is also necessary for the first conclusion.

Now we consider the circular law and its outliers for random matrices formed as a low-rank perturbation of an iid random matrix.  The circular law is stable under such perturbations:

\begin{theorem}[Circular law for low rank perturbations of iid matrices]\label{lrc}\cite[Corollary 1.17]{tv-circular2} Let $X_n$ be an iid random matrix, and for each $n$, let $C_n$ be a deterministic matrix with rank $o(n)$ obeying the Frobenius norm bound 
$$ \|C_n\|_F := (\tr C_n C_n^*)^{1/2} = O(n^{1/2}).$$
Then $\mu_{\frac{1}{\sqrt{n}} X_n + C_n}$ converges both in probability and in the almost sure sense to the circular measure $\mu_c$.
\end{theorem}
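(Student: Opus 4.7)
My approach is based on Girko's Hermitization. To establish $\mu_{A_n} \to \mu_c$ for $A_n := \frac{1}{\sqrt{n}} X_n + C_n$, it suffices by standard log-potential theory to verify (i) $\frac{1}{n^2}\|A_n\|_F^2$ is bounded in probability, and (ii) for almost every $z \in \C$, $\frac{1}{n} \log|\det(A_n - zI)|$ converges in probability to $U(z) := \int_\C \log|w - z|\, d\mu_c(w)$. Condition (i) follows immediately from $\|C_n\|_F = O(n^{1/2})$ together with $\E\|\frac{1}{\sqrt n} X_n\|_F^2 = n$. Letting $B_n := \frac{1}{\sqrt n} X_n$, the circular law for iid matrices supplies the analogous log-determinant convergence for $B_n$, so (ii) reduces to showing that
$$
\Delta_n(z) := \frac{1}{n}\sum_{j=1}^n \bigl(\log\sigma_j(A_n - zI) - \log\sigma_j(B_n - zI)\bigr)
$$
tends to $0$ in probability for a.e.\ $z$.

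To bound $\Delta_n(z)$ I split the singular values into three regimes governed by parameters $0 < \eps < K$. In the bulk $[\eps, K]$ I combine singular value interlacing --- which gives $\sigma_{j+r}(B_n - zI) \leq \sigma_j(A_n - zI) \leq \sigma_{j-r}(B_n - zI)$ with $r := \rank C_n = o(n)$ --- and the Hoffman--Wielandt bound $\sum_j |\sigma_j(A_n - zI) - \sigma_j(B_n - zI)|^2 \leq \|C_n\|_F^2 = O(n)$. Since $r/n \to 0$ and $\log$ is Lipschitz on $[\eps, K]$, the bulk contribution to $\Delta_n(z)$ is $o(1)$. For singular values exceeding $K$, both Frobenius norms are $O(\sqrt n)$, so at most $O(n/K^2)$ singular values of either matrix can exceed $K$; using $\log t \leq C t^{1/2}$ for $t \geq 1$ together with Cauchy--Schwarz shows their contribution to the normalised log-sum is $O(1/\sqrt K)$ uniformly in $n$, which is negligible after $K \to \infty$.

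The main obstacle is the small-singular-value regime, where $\log\sigma_j$ is unbounded below. Here I would invoke the quantitative Tao--Vu lower bound for the least singular value of an iid matrix with a deterministic perturbation, yielding $\sigma_{\min}(A_n - zI), \sigma_{\min}(B_n - zI) \geq n^{-B}$ with probability $1 - o(1)$ for a.e.\ $z$ and some constant $B$; the bound for $A_n - zI$ is essentially the same as for $B_n - zI$ because $C_n$ is deterministic and the Tao--Vu argument is robust under bounded-norm deterministic shifts. Combined with a count of singular values in $[0, \eps]$ (also established in the Tao--Vu circular law papers, and transferable across the rank-$r$ perturbation via interlacing), this renders $\log\sigma_j$ uniformly integrable against the empirical singular value distribution, so the small-regime contribution to $\Delta_n(z)$ tends to $0$ on taking first $n \to \infty$ and then $\eps \to 0$. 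Finally, the almost sure version is extracted from convergence in probability by a routine concentration argument (McDiarmid or Talagrand applied to the log-determinant after a truncation of the entries), exactly as in the proof of the circular law itself.
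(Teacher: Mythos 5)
The paper does not prove Theorem \ref{lrc}; it is quoted verbatim from \cite[Corollary 1.17]{tv-circular2}, so there is no in-paper proof to compare against. Your reconstruction does, however, follow the same general strategy used in \cite{tv-circular2}: Girko Hermitization plus a replacement principle, reducing the problem to showing that the normalized log-determinant difference $\Delta_n(z)$ vanishes in probability, with the log-potential controlled via interlacing, the Frobenius bound, and least-singular-value estimates. So at the level of architecture your sketch is faithful to the source.

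That said, two of your steps do not work as written. First, the ``Hoffman--Wielandt plus Lipschitz'' bound for the bulk gives only $\frac{1}{n\eps}\sum_j|\sigma_j(A_n-z)-\sigma_j(B_n-z)| \ll \frac{1}{n\eps}\cdot\sqrt{n}\cdot\|C_n\|_F = O(1/\eps)$, which is $O(1)$, not $o(1)$; the Frobenius condition does not by itself force $\Delta_n(z)\to 0$ on the bulk. What actually closes the bulk is interlacing: since $C_n$ has rank $r=o(n)$, the sorted singular values shift by at most $r$ indices, so the two log-sums over a range where $\log$ is bounded by $O(\log(K/\eps))$ differ by at most $O(r\log(K/\eps)/n)=o(1)$. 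Second, and more seriously, the small-singular-value regime is where essentially all the difficulty lives, and you dispose of it with ``a count of singular values in $[0,\eps]$\ldots transferable via interlacing'' plus ``uniform integrability.'' Note that interlacing transfers the count only up to an $O(r)$ error, and the contribution of those $O(r)$ mismatched small singular values is, a priori, $O(r\log n/n)$ once you only know $\sigma_{\min}\gtrsim n^{-O(1)}$; since $r=o(n)$ does not imply $r=o(n/\log n)$, this naive estimate does not close. What is actually needed is a genuine uniform integrability statement for $\log\sigma$ against the empirical singular value distribution, uniformly in $n$ --- this is the content of \cite[Lemma 3.1 and Proposition 5.1]{tv-circular2} and is proved by a dyadic decomposition over scales $\sigma\in[2^{-k-1},2^{-k}]$ combined with concentration for the counting function, not by interlacing alone. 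Your sketch names the conclusion without supplying the argument, and this is precisely the hard part of the cited Corollary 1.17. The almost-sure upgrade is likewise nontrivial (it requires a Borel--Cantelli argument over a net in $z$ together with concentration for the log-determinant, as in \cite{tv-circular2}), and deserves more than ``routine.''
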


\begin{remark} Thanks to a recent result of Bordenave \cite{bordenave}, the $O(n^{1/2})$ bound here can be relaxed to $O(n^{O(1)})$.
\end{remark}

However, the low rank perturbation $C_n$ can now create outliers.  Our first main result is to describe these outliers in the case when $C_n$ has bounded rank and bounded operator norm, and $x$ has finite fourth moment.  In this case, it turns out that the outliers of $\frac{1}{\sqrt{n}} X_n + C_n$ are close to those of $C_n$.  More precisely, we have

\begin{theorem}[Outliers for small low rank perturbations of iid matrices]\label{small-rank} Let $X_n$ be an iid random matrix whose atom distribtuion has finite fourth moment, and for each $n$, let $C_n$ be a deterministic matrix with rank $O(1)$ and operator norm $O(1)$.  Let $\eps > 0$, and suppose that for all sufficiently large $n$, there are no eigenvalues of $C_n$ in the band $\{ z \in \C: 1+\eps < |z| < 1+3\eps \}$, and there are $j$ eigenvalues $\lambda_1(C_n),\ldots,\lambda_j(C_n)$ for some $j=O(1)$ in the region $\{z \in \C: |z| \geq 1+3\eps \}$.  Then, almost surely, for sufficiently large $n$, there are precisely $j$ eigenvalues $\lambda_1(\frac{1}{\sqrt{n}} X_n + C_n), \ldots, \lambda_j(\frac{1}{\sqrt{n}} X_n + C_n)$ of $\frac{1}{\sqrt{n}} X_n + C_n$ in the region $\{ z \in \C: |z| \geq 1+2\eps \}$, and after labeling these eigenvalues properly, $\lambda_i(\frac{1}{\sqrt{n}} X_n + C_n) = \lambda_i(C_n) + o(1)$ as $n \to \infty$ for each $1 \leq i \leq j$.
\end{theorem}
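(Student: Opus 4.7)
The plan is to express the outlier eigenvalues via Schur's determinant identity as zeros of a low-dimensional determinantal function, then show this function is well approximated by a deterministic $r\times r$ determinant whose zeros are precisely the outlier eigenvalues of $C_n$.

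\emph{Reduction to a small determinant.} Factor $C_n = U_n V_n^*$ with $U_n, V_n$ of size $n \times r$, where $r := \rank(C_n) = O(1)$ and $\|U_n\|_{op}, \|V_n\|_{op} = O(1)$ (via singular value decomposition, say). By Theorem \ref{noout}, almost surely for $n$ large we have $\rho(\tfrac{1}{\sqrt n}X_n) \leq 1 + \eps/2$, so the matrix $zI - \tfrac{1}{\sqrt n} X_n$ is invertible for $|z| \geq 1+\eps$, and Sylvester's determinant identity gives
\[
\det\bigl(zI - \tfrac{1}{\sqrt{n}} X_n - C_n\bigr) = \det\bigl(zI - \tfrac{1}{\sqrt{n}} X_n\bigr)\, h_n(z),
\]
where $h_n(z) := \det\bigl(I_r - V_n^*(zI - \tfrac{1}{\sqrt{n}} X_n)^{-1} U_n\bigr)$. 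Hence the outlier eigenvalues of $\tfrac{1}{\sqrt n}X_n + C_n$ in $\{|z| \geq 1+2\eps\}$ are exactly the zeros of $h_n$ in that region.

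\emph{Neumann expansion and deterministic companion.} For $|z| > 1+\eps$, expand
\[
V_n^*(zI - \tfrac{1}{\sqrt{n}} X_n)^{-1} U_n = \tfrac{1}{z} V_n^* U_n + \sum_{k=1}^{\infty} z^{-(k+1)} V_n^* (\tfrac{1}{\sqrt{n}} X_n)^{k} U_n,
\]
which converges absolutely by Theorem \ref{noout}. Set $g_n(z) := \det\bigl(I_r - \tfrac{1}{z} V_n^* U_n\bigr)$; its zeros in $\{z \neq 0\}$ are precisely the nonzero eigenvalues of $V_n^*U_n$, and hence (by $\operatorname{spec}(V_n^*U_n)\setminus\{0\} = \operatorname{spec}(U_nV_n^*)\setminus\{0\}$) of $C_n$. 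By hypothesis, $g_n$ has no zeros in the band $1+\eps<|z|<1+3\eps$ and exactly $j$ zeros in $\{|z|\geq 1+2\eps\}$, located at $\lambda_1(C_n),\ldots,\lambda_j(C_n)$. It therefore suffices to show $h_n(z) - g_n(z) \to 0$ almost surely, uniformly on compact subsets of $\{|z|\geq 1+\eps\}$. For each fixed $k\geq 1$, a path-counting calculation using independence and $\E x = 0$ shows that each entry of $V_n^*(\tfrac{1}{\sqrt n}X_n)^k U_n$ has mean zero and second moment $O_k(1/n)$; higher moment bounds, obtained after truncating the atoms at a polynomial scale $n^\delta$ (a standard maneuver in the circular-law literature that does not affect the outlier structure since the Frobenius norm of the truncation error is $o(\sqrt n)$), together with Borel--Cantelli upgrade this to almost sure decay. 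The Neumann tail $\sum_{k\geq K}$ is dominated, via Theorem \ref{noout}, by $\|V_n\|_{op}\|U_n\|_{op}\sum_{k\geq K}(k+1+o(1))(1+\eps)^{-(k+1)}$, which is uniform in $z$ on our region and can be made arbitrarily small by taking $K$ large.

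\emph{Conclusion via Hurwitz.} The absence of eigenvalues of $C_n$ in the open band yields, uniformly in $n$, a lower bound $|g_n(z)| \gtrsim_{\eps,r} 1$ on the circle $|z| = 1+2\eps$, and more generally away from the $\lambda_i(C_n)$. Since $h_n \to g_n$ uniformly on compact subsets of $\{|z|\geq 1+\eps\}$, Hurwitz's theorem applied in the annulus $\{1+2\eps < |z| < R\}$ (with $R$ larger than the almost-sure eventual bound on $\|\tfrac{1}{\sqrt n}X_n + C_n\|_{op}$) yields that $h_n$ has exactly $j$ zeros in this annulus for $n$ large, each within $o(1)$ of one of the $\lambda_i(C_n)$, as claimed. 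The main technical hurdle is the concentration step: promoting the $L^2$ bound $O(1/n)$ on each Neumann coefficient to almost-sure decay under only a fourth moment hypothesis on $x$. The truncation trick reduces this to the bounded-entry case, after which higher moment bounds and Borel--Cantelli are straightforward; once this is in place, the Sylvester/Neumann/Hurwitz skeleton proceeds mechanically.
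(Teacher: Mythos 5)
Your proposal follows essentially the same route as the paper: factor $C_n$ by SVD, apply the identity $\det(1+AB)=\det(1+BA)$ (Sylvester's determinant identity) to reduce the eigenvalue problem to zeros of an $r\times r$ determinant $h_n(z)$, compare $h_n$ to its deterministic companion $g_n$ via the Neumann series after invoking Theorem \ref{noout} for the spectral radius bound, and finish with Rouch\'e/Hurwitz. The only meaningful difference is in the treatment of the Neumann coefficients $V_n^*(\tfrac{1}{\sqrt{n}}X_n)^k U_n$: you invoke higher moment bounds plus Borel--Cantelli, while the paper's Lemma \ref{coef} formally proceeds by a second-moment comparison to the Gaussian ensemble. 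Your instinct to go to higher moments is actually the more robust one, since the paper's stated bound $\E|Z_n|^2 = O(1/n)$ does not by itself give almost sure convergence via Chebyshev and Borel--Cantelli without additional structure; the standard fix is precisely what you sketch (truncate to bounded atoms, then bound a sufficiently high moment by $O(n^{-2})$). You should be aware, though, that you have not carried out the higher-moment combinatorics, and this is the only genuinely nontrivial part of the argument --- the paper devotes its whole graph-counting section to a version of it, and a complete proof would need to do the same.
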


Thus, for instance, if one perturbs $\frac{1}{\sqrt{n}} X_n$ by a bounded rank, bounded operator norm matrix $C_n$ whose eigenvalues all lie inside the unit disk $\D := \{ z \in \C: |z| \leq 1 \}$ (e.g. $C_n$ could be a nilpotent matrix), then no outliers are created; but once $C_n$ has eigenvalues leaving the unit disk, the perturbed matrix $\frac{1}{\sqrt{n}} X_n + C_n$ will also have outliers in asymptotically the same location.  Theorem \ref{small-rank} is illustrated in Figures \ref{Thm1.6Fig}, \ref{Thm1.6Figb}.

\begin{figure}
\begin{center}
\scalebox{.7}{\includegraphics{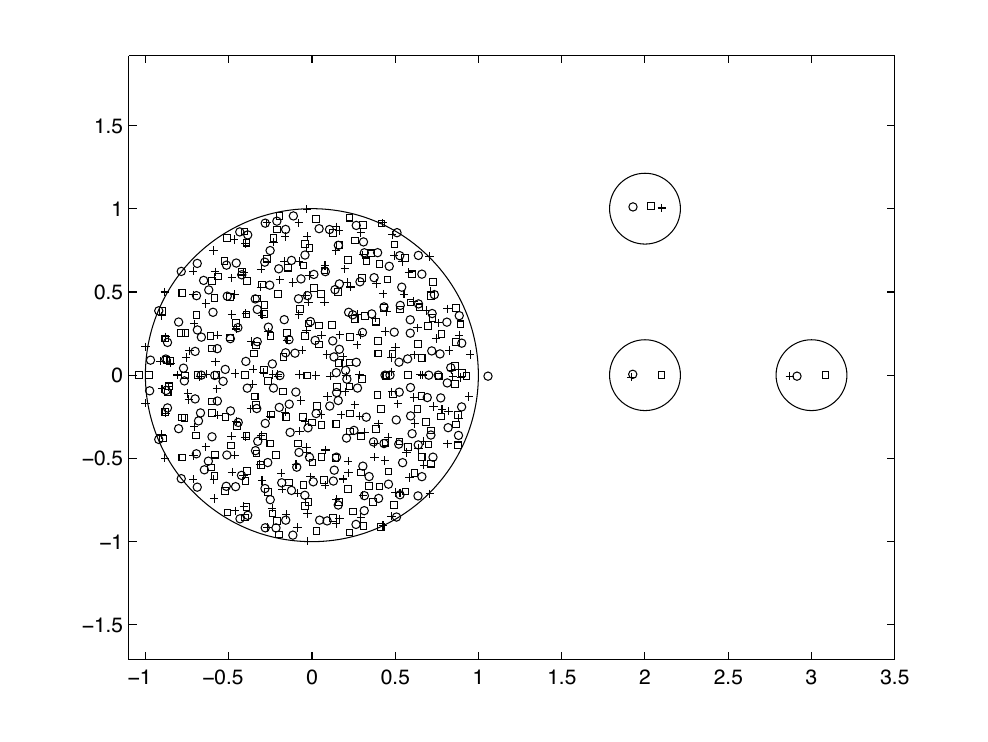}}
\end{center}
\caption{This figure shows the eigenvalues of three $200$ by $200$ iid random matrices with atom
distribution $x$ defined by $\Pr(x=1) = \Pr(x=-1) = 1/2$, each of which was
perturbed by adding the matrix $\operatorname{diag}(2+i,3,2,0,0,\dots,0)$.
The small circles are centered at $2+i$, $2$, and $3$, respectively, and each
have radius $n^{-1/4}$ where $n=200$.  (Figure by Phillip Wood.)}
\label{Thm1.6Fig}
\end{figure}

\begin{figure}
\begin{center}
\scalebox{.6}{\includegraphics{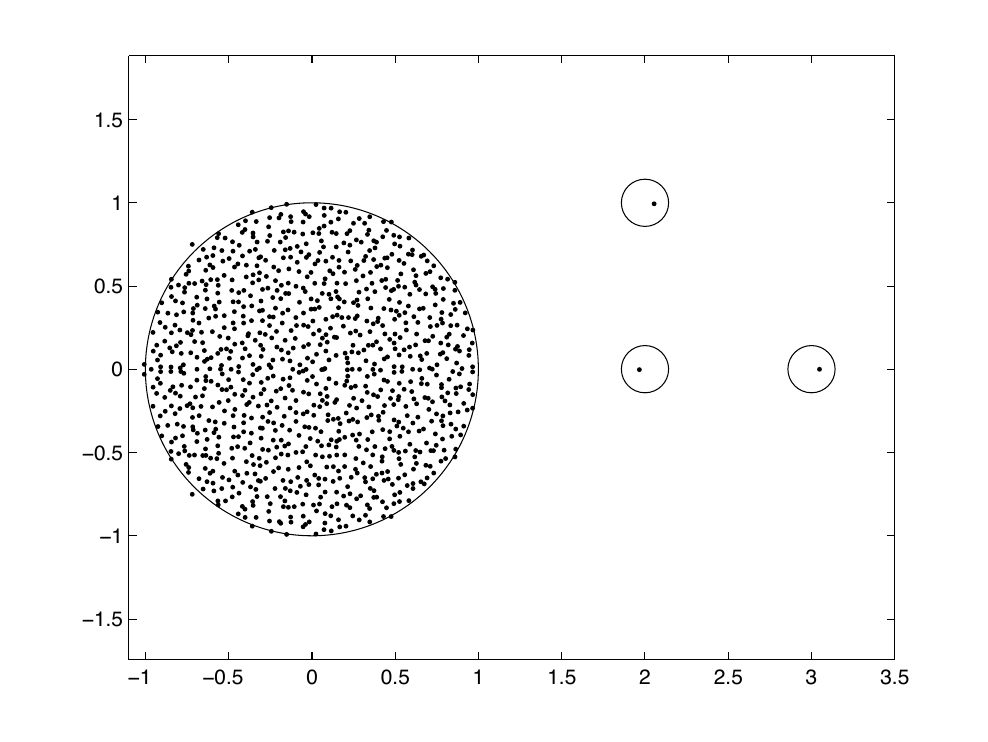}}
\end{center}
\caption{This figure shows the eigenvalues of a single $1000$ by $1000$ iid
random matrix with atom distribution $x$ defined by $\Pr(x=1) = \Pr(x=-1) =
1/2$ which was perturbed by adding the matrix
$\operatorname{diag}(2+i,3,2,0,0,\dots,0)$.  The small circles are centered at
$2+i$, $2$, and $3$, respectively, and each have radius $n^{-1/4}$ where
$n=1000$. (Figure by Phillip Wood.)}
\label{Thm1.6Figb}
\end{figure}

\begin{remark} An analogous result for Wigner matrices instead of iid matrices has recently been established in \cite{cdf}, \cite{prs}, with more precise control (in particular, a central limit theorem) on the distribution of the outlier eigenvalues; the methods used are somewhat different, but the techniques developed here can be adapted to the Wigner case (Alexander Soshnikov, private communication).  See also \cite{feral}, \cite{br} for further results in the Wigner case, whose methods are close to those used here, \cite{peche} for a treatment of the GUE case, \cite{bby} for a treatment of the LUE case, and \cite{bai-exact}, \cite{bs}, \cite{baik} for a treatment of the covariance matrix case.  Interestingly, in the Wigner case the outlier eigenvalues $\lambda_i(\frac{1}{\sqrt{n}} X_n + C_n)$ of the perturbed matrix are not close to the outlier eigenvalues $\lambda_i(C_n)$ of the original matrix, but rather to the shifted eigenvalues $\lambda_i(C_n) + \frac{\sigma^2}{\lambda_i(C_n)}$, where $\sigma^2$ is the variance of the entries of the Wigner matrix $\sigma$.  This is ultimately because the powers $(\frac{1}{\sqrt{n}} X_n)^m$ have a significant presence on the diagonal in the Wigner case, in contrast with the iid case where all entries are small.  Alternatively: the Wigner semicircular law has nonzero moments, while all nontrivial (pure) moments of the circular law vanish.
\end{remark}

Theorem \ref{small-rank} is proven in Section \ref{slrp}.  The main tools are asymptotics of Stieltjes transforms outside of the unit disk $\D$, combined with the fundamental matrix identity\footnote{We thank Percy Deift for emphasising the importance of this identity in random matrix theory.}
\begin{equation}\label{det-ident}
\det(1+AB) = \det(1+BA)
\end{equation}
valid for arbitrary $n \times k$ matrices $A$ and $k \times n$ matrices $B$.  Note that the left-hand side is an $n \times n$ determinant, while the right-hand side is a $k \times k$ determinant.  For low rank perturbations, we will be able to apply \eqref{det-ident} with $k$ bounded and $n$ going to infinity, allowing one to transform an unbounded-dimensional problem into a finite-dimensional one.

Theorem \ref{small-rank} only deals with perturbations that are relatively small, having an operator norm of $O(1)$.  It is also of interest to consider larger perturbations, such as those caused by adjusting the mean of each coefficient of $X_n$ by $O(1)$.  Here, the situation is more complicated, and we will consider only a few model perturbations, rather than attempt to obtain the most general result.

We first consider the case of iid matrices with non-zero mean, which we write as $\frac{1}{\sqrt{n}} X_n + \mu \sqrt{n} \phi_n \phi_n^*$, where $\mu$ is a fixed complex number (independent of $n$) and $\phi_n$ is the unit column vector $\phi_n := \frac{1}{\sqrt{n}} (1,\ldots,1)^*$; this corresponds to shifting the atom distribution $x$ by $\mu$ (so that it has mean $\mu$ rather than mean zero).  This is a rank one perturbation of $\frac{1}{\sqrt{n}} X_n$.  The circular law still holds for this ensemble, thanks to Theorem \ref{lrc} (or the earlier result of Ch\"afai \cite{cha}).  However, in view of Theorem \ref{small-rank}, we expect a single large outlier near $\mu \sqrt{n}$.  This is indeed the case:

\begin{theorem}[Outlier for iid matrices with nonzero mean]\label{nonzero-mean} Let $X_n$ be an iid random matrix whose atom distribution has finite fourth moment, and let $\mu \in \C$ be a non-zero quantity independent of $n$.  Then almost surely, for sufficiently large $n$, all the eigenvalues of $\frac{1}{\sqrt{n}} X_n + \mu \sqrt{n} \phi_n \phi_n^*$ lie in the disk $\{ z \in \C: |z| \leq 1+o(1) \}$, with a single exception taking the value $\mu \sqrt{n} + o(1)$.
\end{theorem}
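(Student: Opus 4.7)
The plan is to exploit the rank-one structure of the perturbation to convert the eigenvalue problem into a scalar equation, and then locate the zeros of that equation by Rouché-type comparison with an explicit deterministic model. Writing $M_n := \frac{1}{\sqrt{n}}X_n + \mu\sqrt{n}\phi_n\phi_n^*$ and $R_n(z) := (\frac{1}{\sqrt{n}}X_n - zI)^{-1}$, the matrix-determinant identity \eqref{det-ident} gives
\[
\det(M_n - zI) \;=\; \det\bigl(\tfrac{1}{\sqrt{n}}X_n - zI\bigr)\cdot g_n(z),\qquad g_n(z) := 1 + \mu\sqrt{n}\,\phi_n^* R_n(z)\phi_n,
\]
for every $z$ outside the spectrum of $\frac{1}{\sqrt{n}}X_n$. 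By Theorem \ref{noout}, almost surely for all sufficiently large $n$ that spectrum is contained in $\{|z|\leq 1+\eta\}$ and $\|\frac{1}{\sqrt{n}}X_n\|_{op}\leq 2+\eta$, for any fixed $\eta>0$. Hence on the outer region $\{|z|>1+\eta\}$ the function $g_n$ is holomorphic, and its zeros are exactly the eigenvalues of $M_n$ to be counted.

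My next step would be to compare $g_n$ with the deterministic model $\tilde g(z) := 1 - \mu\sqrt{n}/z$, whose only zero is $z=\mu\sqrt{n}$. The key input is the isotropic asymptotic
\[
\phi_n^* R_n(z)\phi_n \;=\; -\tfrac{1}{z} + o(1) \qquad \text{a.s., uniformly on compact subsets of } \{|z|>1+\eta\}.
\]
In the regime $|z|>2+\eta$ this follows directly from the Neumann expansion $\phi_n^* R_n(z)\phi_n = -\sum_{k\geq 0} z^{-k-1}\phi_n^*(\frac{1}{\sqrt{n}}X_n)^k\phi_n$: the $k=0$ term contributes $-1/z$; for each fixed $k\geq 1$, a direct second-moment computation using the iid structure (the sum $n^{-1-k/2}\sum_{i_0,\ldots,i_k} X_{i_0 i_1}\cdots X_{i_{k-1}i_k}$ is dominated by unpaired walks, giving variance $O_k(n^{-1})$) yields $\phi_n^*(\frac{1}{\sqrt{n}}X_n)^k\phi_n = O_k(n^{-1/2})$ a.s.; the Neumann tail is summable via the operator-norm bound from Theorem \ref{noout}. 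The hard part will be the intermediate band $1+\eta<|z|\leq 2+\eta$, where the Neumann series no longer converges and one must invoke the same isotropic Stieltjes-transform asymptotic that is developed in Section \ref{slrp} for the proof of Theorem \ref{small-rank}, specialized to the unit vector $\phi_n$.

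Granting the isotropic asymptotic, two applications of Rouché's theorem finish the argument. First, on the small circle $\{|z-\mu\sqrt{n}|=n^{-1/4}\}$, one has $|\tilde g(z)| = |z-\mu\sqrt{n}|/|z| = \Omega(n^{-3/4})$, and the Neumann bound gives $|g_n - \tilde g| = |\mu|\sqrt{n}\,|\phi_n^*R_n(z)\phi_n+1/z| = O_{a.s.}(n^{-1}) = o(n^{-3/4})$, so $g_n$ has the same number of zeros inside as $\tilde g$, namely exactly one, giving an outlier at $\mu\sqrt{n} + O(n^{-1/4}) = \mu\sqrt{n} + o(1)$. Second, to exclude further outliers, I would substitute $w=1/z$; since $g_n(\infty)=1$, the function $G_n(w) := g_n(1/w)$ extends holomorphically to $\{|w|<1/(1+\eta)\}$ with $G_n(0)=1$, and the corresponding $\tilde G(w)=1-\mu\sqrt{n}\,w$ satisfies $|\tilde G(w)| \geq |\mu|\sqrt{n}/(1+\eta) - 1 = \Omega(\sqrt{n})$ on the boundary $|w|=1/(1+\eta)$, dominating the $o(\sqrt{n})$ error supplied by the isotropic asymptotic at $|z|=1+\eta$. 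Rouché then forces $G_n$ to have the same number of zeros in the disk as $\tilde G$, namely one --- which must coincide with the outlier already located --- and the theorem follows.
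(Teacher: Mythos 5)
Your overall route is exactly the paper's: use the determinant identity to reduce to a scalar equation in $g_n(z) = 1 + \mu\sqrt{n}\,\phi_n^* R_n(z)\phi_n$, expand $R_n(z)$ as a Neumann series in $1/z$, compare with the deterministic model $\tilde g(z) = 1-\mu\sqrt{n}/z$, and finish with Rouch\'e. However, your proposal misidentifies where the difficulty lies. You worry that in the band $1+\eta < |z| \leq 2+\eta$ ``the Neumann series no longer converges,'' but this is not so: the refined bound in Theorem \ref{noout}, namely $\|(\frac{1}{\sqrt{n}}X_n)^m\|_{op} = m+1+o(1)$ a.s.\ for each fixed $m$, implies $\|(\frac{1}{\sqrt{n}}X_n)^{m_0}\|_{op} \leq (1+\eps)^{m_0}$ for some fixed $m_0$ and all large $n$, hence $\|(\frac{1}{\sqrt{n}}X_n)^m\|_{op} \leq K(1+\eps)^m$ uniformly. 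The Neumann series therefore converges a.s.\ for all $|z|>1+2\eps$, which is precisely how the paper (and its proof of Theorem \ref{small-rank}) handles the entire band at once; no separate ``isotropic Stieltjes-transform asymptotic'' is needed for the intermediate region.

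A second, more substantive gap is your quantitative claim that $\phi_n^*(\frac{1}{\sqrt{n}}X_n)^k\phi_n = O_k(n^{-1/2})$ almost surely. The second-moment computation yields a variance of $O_k(n^{-1})$, which gives $O(n^{-1/2})$ in $L^2$ or in probability, but not almost surely (Borel--Cantelli would require summable tail probabilities, which a variance bound alone does not supply). What is actually available from Lemma \ref{coef} is the weaker a.s.\ estimate $\phi_n^*(\frac{1}{\sqrt{n}}X_n)^k\phi_n = o(1)$, hence $|g_n(z)-\tilde g(z)| = o(n^{-1/2})$ near $z\approx\mu\sqrt{n}$, not $O(n^{-1})$. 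Consequently your Rouch\'e circle of radius $n^{-1/4}$, on which $|\tilde g| \asymp n^{-3/4}$, is not dominated by the true error term. The fix (as the paper does) is to use a circle of fixed radius $\delta>0$ around $\mu\sqrt{n}$: there $|\tilde g|\gtrsim \delta/\sqrt{n}$, which does beat $o(n^{-1/2})$ for $n$ large, giving exactly one zero within distance $\delta$ for every fixed $\delta>0$, i.e.\ at $\mu\sqrt{n}+o(1)$ --- which is all the theorem asserts. Your outer (global) Rouch\'e via $w=1/z$ is a correct and slightly cleaner way to bound the total count, equivalent to the paper's lower bound on $|g|$ in the remainder of the annulus.
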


We prove this result in Section \ref{nzm-sec}.  One can obtain more precise information on the distribution of this exceptional eigenvalue, particularly if one assumes more moment hypotheses on the atom distribution $x$; see \cite{silv}.  The existence of this exceptional eigenvalue was already noted back in \cite{andrew}.  Theorem \ref{nonzero-mean} is illustrated in Figure \ref{Thm1.8Fig}.  Figure \ref{Thm1.8Figb} corresponds to the case of a smaller value of $\mu$, and falls instead under the regime covered by Theorem \ref{small-rank}.

\begin{figure}
\begin{center}
\scalebox{.7}{\includegraphics{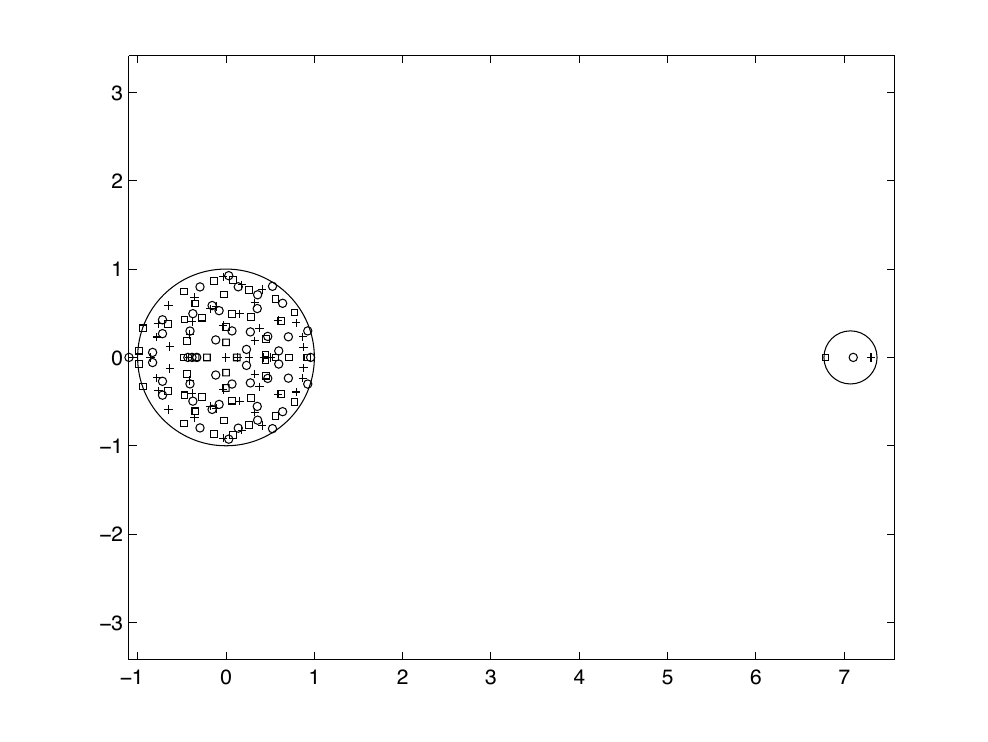}}
\end{center}
\caption{This figure shows the eigenvalues of three $50$ by $50$ iid random matrices with atom
distribution $x$ defined by $\Pr(x=1) = \Pr(x=-1) = 1/2$, each of which was
perturbed by adding the matrix $\mu \sqrt n \phi_n \phi_n^*$ where $\mu=1$.
The small circle is centered at $\sqrt{50}$ and has radius $n^{-1/4}$ where
$n=50$.  (Figure by Phillip Wood.)}
\label{Thm1.8Fig}
\end{figure}

\begin{figure}
\begin{center}
\scalebox{.6}{\includegraphics{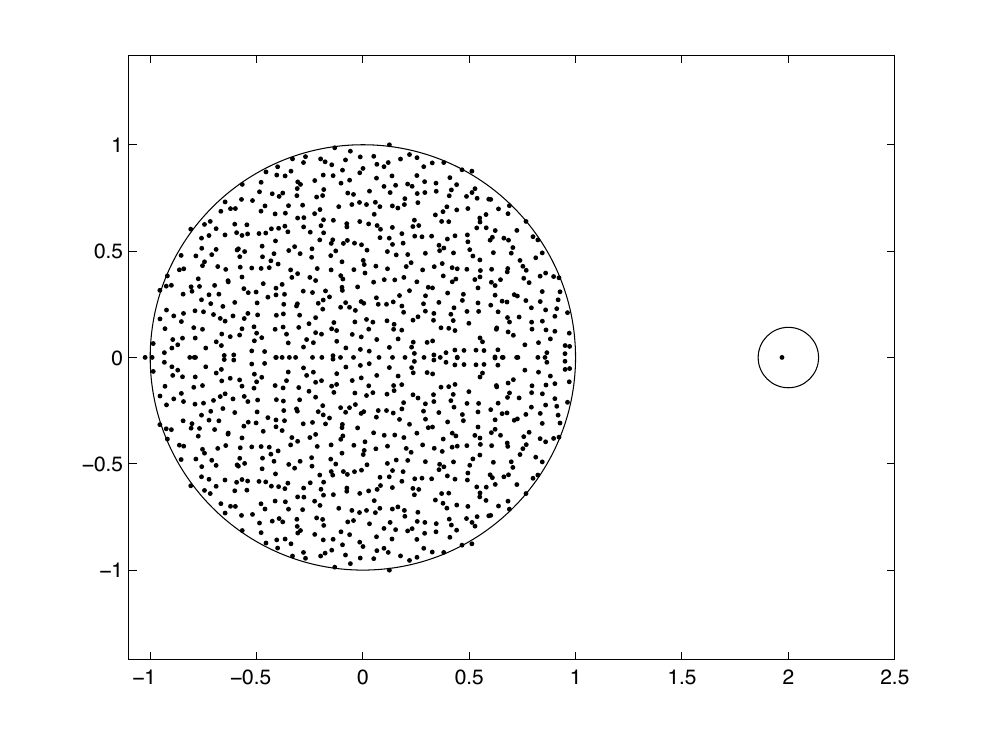}}
\end{center}
\caption{This figure shows the eigenvalues of a single $1000$ by $1000$ iid
random matrix with atom
distribution $x$ defined by $\Pr(x=1) = \Pr(x=-1) = 1/2$ which was
perturbed by adding the matrix $\mu \sqrt n \phi_n \phi_n^*$ where
$\mu=2/\sqrt{1000}$.
The small circle is centered at $2$ and has radius $n^{-1/4}$ where $n=1000$. (Figure by Phillip Wood.)}
\label{Thm1.8Figb}
\end{figure}

Next, we consider a model that was introduced in \cite{rajan}, in the context of neural networks.  In our notation, this model takes the form
\begin{equation}\label{ann}
 A_n := \frac{1}{\sqrt{n}} X_n + \mu B_n
\end{equation}
where $\mu > 0$ is a fixed parameter, and $B_n$ is a random matrix (independent of $X_n$) such that the columns of $B_n$ are iid, with each column equal to $\sqrt{\frac{1-p}{p}} \phi_n$ with probability $p$ and $-\sqrt{\frac{p}{1-p}} \phi_n$ with probability $1-p$, for some fixed $0 < p < 1$.  (In the notation of \cite{rajan}, the excitatory mean $\mu_E$ is $\mu \sqrt{(1-p)/p}$ and the inhibitory mean $\mu_I$ is $-\mu\sqrt{p/(1-p)}$, and the excitatory and inhibitory variances are assumed to be equal.)  Note that one can write
\begin{equation}\label{ann0}
A_n = \frac{1}{\sqrt{n}} X_n + \mu \sqrt{n} \phi_n \psi_n^*
\end{equation}
where $\psi_n$ is a random vector whose entries are iid and equal $\frac{1}{\sqrt{n}} \sqrt{\frac{1-p}{p}}$ with probability $p$ and $-\frac{1}{\sqrt{n}} \sqrt{\frac{p}{1-p}}$ with probability $1-p$; with this normalisation, $\psi_n$ has mean zero and unit variance.

Again, by Theorem \ref{lrc}, the ESD of $A_n$ is governed by the circular distribution $\mu_c$ in the limit $n \to \infty$; however, as observed numerically in \cite{rajan}, a small number of outliers also appear for $A_n$.  

It is possible to explain the outliers by the arguments of this paper.  However, in contrast to the situations in Theorem \ref{small-rank} or Theorem \ref{nonzero-mean}, in which the outliers essentially have a deterministic location up to $o(1)$ errors, for the model \eqref{ann}, the outliers retain significant randomness at macroscopic scales, and need to be modeled by a point process rather than by a deterministic law.  To describe this point process, we introduce the \emph{$k$-point correlation functions} $\rho^{(k)}_{A_n}: \C^k \to \R^+$ for the ESD of $A_n$ for $1 \leq k \leq n$, defined as the unique symmetric function such that
$$ \int_{\C^k} F(z_1,\ldots,z_k) \rho^{(k)}_{A_n}(z_1,\ldots,z_k)\ d^2 z_1 \ldots d^2 z_k = \frac{1}{k!} \E \sum_{i_1,\ldots,i_k \in \{1,\ldots,n\}, \hbox{ distinct}} F(\lambda_{i_1}(A_n), \ldots, \lambda_{i_k}(A_n))$$
for all continuous, compactly supported test functions $F: \C^k \to \C$; note that the right-hand side is not dependent on how one orders the $n$ eigenvalues of $A_n$.  Here, $d^2 z$ denotes two-dimensional Lebesgue measure on $\C$.  If the atom distribution of $A_n$ is discrete, then $\rho^{(k)}_{A_n}$ needs to be interpreted as a distribution or measure rather than as a function, but this technicality will not concern us here.

It turns out that the correlation functions $\rho^{(k)}_{A_n}$ have a limiting law $\rho^{(k)}_\infty$ outside of the unit disk $\D$ (inside the disk, one expects these functions to go to infinity, thanks to the circular law and the choice of normalisation).  We do not have a completely explicit formula for this limit, but can describe it instead as the zeroes of a random Laurent series.  More precisely, consider the random Laurent series
$$ g(z) := 1 - \mu \sum_{j=1}^\infty \frac{g_j}{z^j}$$
where $g_1, g_2, \ldots$ are iid copies of the real Gaussian distribution $N(0,1)_\R$.  From the Borel-Cantelli lemma we see that this Laurent series is almost surely convergent in the complement $\C \backslash \D$ of the closed unit disk $\D$, and almost surely has a finite number of zeroes in the region $\{ z \in \C: |z| > 1+\eps\}$ for any fixed $\eps > 0$.  We then define the limiting correlation function $\rho^{(k)}_\infty: (\C \backslash \D)^k \to \R^+$ outside of this disk as the unique symmetric function (or more precisely, distribution) such that
\begin{equation}\label{ck} \int_{\C^k} F(z_1,\ldots,z_k) \rho^{(k)}_{\infty}(z_1,\ldots,z_k)\ d^2z_1 \ldots d^2z_k = \frac{1}{k!} \E \sum_{w_1,\ldots,w_k \in \Lambda_g, \hbox{ distinct}} F(w_1,\ldots,w_k)
\end{equation}
for all continuous, compactly supported test functions $F: (\C \backslash \D)^k \to \C$, where $\Lambda_g$ are the zeroes of $g$ (counting multiplicity); a little more explicitly, $\rho^{(k)}_\infty(z_1,\ldots,z_k)$ can be defined for distinct $z_1,\ldots,z_k \in \C \backslash \D$ as
$$ \rho^{(k)}_\infty(z_1,\ldots,z_k) = \lim_{\eps \to 0} \frac{\rho^{(k),\eps}_\infty(z_1,\ldots,z_k)}{(\pi\eps^2)^k}$$
where $\rho^{(k),\eps}_\infty(z_1,\ldots,z_k)$ is the probability of the event that there is a zero of $g$ within $\eps$ of $z_j$ for each $j=1,\ldots,k$.  

\begin{remark}  If the $g_1,g_2,\ldots$ were \emph{complex} Gaussian $N(0,1)_\C$ instead of real, we normalised $\mu=1$, and we replaced the constant coefficient $1$ by another complex gaussian $g_0$, then $g(z)$ would be a Gaussian power series (GPS) in the variable $1/z$.  Gaussian power series have been intensively studied (see the recent text \cite{hough} and the references therein).  In that case, the $k$-point correlation functions are given by the determinantal formula
$$ \rho^{(k)}_{GPS}(z_1,\ldots,z_k) = \frac{1}{\pi^k} \det( \frac{1}{1 - z_i \overline{z_j}} )_{1 \leq i,j \leq k};$$
see \cite{peres}. One may then hope that a somewhat analogous formula might be obtained for the random Laurent series considered here, possibly using the explicit formulae for the correlation functions of zeroes of real random polynomials from \cite{prosen} as a starting point.  We will not pursue this matter.
\end{remark}

We can now state our main theorem regarding this model, which we prove in Section \ref{ull-sec}:

\begin{theorem}[Limiting law]\label{ull} Let $X_n$ be an iid random matrix whose atom distribution $x$ is real-valued and which is either gaussian (i.e. $x \equiv N(0,1)_\R$) or bounded, and let $0 < p < 1$ and $\mu > 0$ be fixed.  Let $A_n$, $\rho^{(k)}_{A_n}$, and $\rho^{(k)}_\infty$ be defined as above.  
\begin{enumerate}
\item[(i)] (Crude upper bound) For any $\eps > 0$, let $N_\eps$ denote the number of eigenvalues of $A_n$ in the region $\{ z \in \Omega: |z| \geq 1+\eps\}$ (counting multiplicity).  Then $\sup_n \E N_\eps^m < \infty$ for all $\eps > 0$ and $m \geq 1$.
\item[(ii)] (Limiting law) $\rho^{(k)}_{A_n}$ converges in the vague topology to $\rho^{(k)}_\infty$ on $(\C \backslash \D)^k$.  In other words, one has
$$ \int_{\C^k} F(z_1,\ldots,z_k) \rho^{(k)}_{A_n}(z_1,\ldots,z_k)\ d^2 z_1 \ldots d^2 z_k \to \int_{\C^k} F(z_1,\ldots,z_k) \rho^{(k)}_{\infty}(z_1,\ldots,z_k)\ d^2 z_1 \ldots d^2 z_k$$
whenever $F: (\C \backslash \D)^k \to \C$ is continuous and compactly supported (in particular, it is supported in the region $\{ (z_1,\ldots,z_k) \in \C^k: |z_1|,\ldots,|z_k| \geq 1+\eps\}$ for some $\eps>0$).  In particular, the limiting distribution is universal with respect to the distribution $x$.
\end{enumerate}
\end{theorem}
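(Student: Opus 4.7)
The strategy follows the same pattern as Theorem \ref{small-rank}: reduce the outlier eigenvalue problem for $A_n$ to a scalar holomorphic equation via the determinant identity \eqref{det-ident}. Since $A_n$ is a rank-one perturbation of $\tfrac{1}{\sqrt{n}}X_n$, for $\lambda$ outside the spectrum of $\tfrac{1}{\sqrt{n}}X_n$ one has
\[ \det(\lambda I - A_n) = \det\bigl(\lambda I - \tfrac{1}{\sqrt{n}}X_n\bigr)\, p_n(\lambda), \qquad p_n(\lambda) := 1 - \mu\sqrt{n}\,\psi_n^*\bigl(\lambda I - \tfrac{1}{\sqrt{n}}X_n\bigr)^{-1}\phi_n. \]
By Theorem \ref{noout}, with overwhelming probability $\tfrac{1}{\sqrt{n}}X_n$ has no eigenvalues in $\{|\lambda|\geq 1+\eps/2\}$, so outliers of $A_n$ in $\{|\lambda|\geq 1+\eps\}$ coincide (with multiplicity) with the zeros of $p_n$ there. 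Expanding the resolvent as a Neumann series (convergent on $\{|\lambda|>1+\eps/2\}$ by the same theorem) yields
\[ p_n(\lambda) = 1 - \mu \sum_{j=1}^\infty \frac{Y_{j-1}}{\lambda^j}, \qquad Y_j := \sqrt{n}\,\psi_n^*\bigl(\tfrac{1}{\sqrt{n}}X_n\bigr)^j\phi_n, \]
which matches the definition of $g$ provided $(Y_j)_{j\geq 0}$ converges jointly to an iid sequence of $N(0,1)_\R$ variables.

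The core of the proof is this joint central limit theorem. Condition on $X_n$: each $Y_j$ is then a linear form in the iid bounded mean-zero unit-variance entries of $\sqrt{n}\,\psi_n$, with coefficient vector $v_j(X_n) := (\tfrac{1}{\sqrt{n}}X_n)^j\phi_n$. The multivariate Lindeberg CLT gives, conditionally on $X_n$ and for any fixed $m$, that $(Y_0,\ldots,Y_m)$ is asymptotically $N(0,\Sigma(X_n))$, where $\Sigma_{ij}(X_n) := v_i(X_n)^*v_j(X_n)$. It therefore suffices to show $\Sigma_{ij}(X_n) \to \delta_{ij}$ in probability. For the mean, one expands $\E\Sigma_{ij}(X_n) = \phi_n^*\E[(\tfrac{1}{\sqrt{n}}X_n^*)^i(\tfrac{1}{\sqrt{n}}X_n)^j]\phi_n$ via the trace method; the flatness of $\phi_n = \tfrac{1}{\sqrt{n}}(1,\ldots,1)^*$ averages every free index uniformly, and this extra averaging collapses the surviving planar pairings down to exactly the identity pairing, yielding $\E\Sigma_{ij}(X_n) = \delta_{ij} + O_{i,j}(1/n)$. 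A parallel second-moment calculation gives concentration. The hypothesis that $x$ is bounded or Gaussian ensures the requisite higher-moment bounds and also gives universality, since only the second moment of $x$ enters the leading-order trace count.

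To pass from coefficients to zeros, one needs uniform-in-$n$ $L^2$ control on the Laurent tail $\sum_{j\geq J}Y_j/\lambda^{j+1}$ on compacta of $\{|\lambda|>1\}$; this follows from the covariance estimate above combined with the geometric factor $|\lambda|^{-j}$ (and the operator-norm bound from Theorem \ref{noout} for the very deep tail). One thus obtains convergence in distribution $p_n \to g$ as random holomorphic functions on $\{|\lambda|>1\}$ in the uniform-on-compacta topology, and Hurwitz's theorem yields convergence of their zero sets as random point processes on $\{|\lambda|>1\}$. The crude upper bound (i) is obtained via Jensen's formula applied to $p_n$ on an annulus inside $\{|\lambda|>1+\eps/2\}$: the normalisation $p_n(\infty)=1$ provides the lower-bound input at infinity, while the supremum of $|p_n|$ on a concentric circle is controlled in $L^m$ by hypercontractivity (bounded case) or direct Gaussian estimates applied to the coefficients $Y_j$, together with Theorem \ref{noout}. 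The uniform integrability supplied by (i) then upgrades the distributional point-process convergence to vague convergence of all $k$-point correlation functions, proving (ii).

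The principal obstacle is the covariance calculation $\E\Sigma_{ij}(X_n) \to \delta_{ij}$. For a generic unit vector $\phi$ one would expect $\E\|v_j\|^2$ to grow like the $j$-th Catalan number $C_j$, with nontrivial cross-covariances as well; the collapse to $\delta_{ij}$ here is a delicate feature of the flat vector $\phi_n$, reflecting that all pure nontrivial moments of the circular law vanish. Executing the combinatorial bookkeeping sharply enough to control not just each fixed $Y_j$ but also the Laurent tail (so that one does not ``lose mass at infinity in $j$'') is the main technical work.
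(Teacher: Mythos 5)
Your proposal mirrors the paper's overall architecture — reduce to zeros of $p_n(z)=1-\mu\sqrt{n}\,\psi_n^*(zI-\tfrac{1}{\sqrt n}X_n)^{-1}\phi_n$ via Lemma~\ref{eigencrit}, expand in a Neumann series, establish a joint Gaussian limit for the Laurent coefficients, pass from coefficient convergence to zero-process convergence, and use Jensen's formula for part (i). The genuine difference is in the central limit theorem for the coefficients $Y_j$. You condition on $X_n$ and treat each $Y_j=\sum_k v_{j,k}\xi_k$ as a linear form in the iid entries $\xi_k$ of $\sqrt n\,\psi_n$, applying the multivariate Lindeberg CLT with (conditional) covariance $\Sigma_{ij}(X_n)=v_i^*v_j$, and then argue $\Sigma(X_n)\to I$ in probability. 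The paper does the reverse conditioning: it fixes $\psi_n$ (deterministically delocalised, unit norm up to $o(1)$) and applies Proposition~\ref{clt}, a self-contained moment-method CLT in the randomness of $X_n$ for bilinear forms $\sqrt n\langle(\tfrac{1}{\sqrt n}X_n)^j u_n,v_n\rangle$. Both routes are viable; yours replaces the moment-method calculation in Proposition~\ref{clt} by a combinatorial estimate for $\E\Sigma_{ij}$ and $\Var\Sigma_{ij}$, of comparable difficulty, together with a delocalisation statement $\|v_j\|_\infty=o(1)$ (with high probability in $X_n$) needed to verify Lindeberg's condition and to uncondition — details you do not spell out. The paper's split is cleaner precisely because the conditioning freezes the simpler random object $\psi_n$, so the only delocalisation needed is deterministic.

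Two inaccuracies are worth flagging. First, the heuristic that for a generic unit vector $\phi$ one would expect $\E\|v_j\|^2\to C_j$ (Catalan) is incorrect: for iid $X_n$ and \emph{any} fixed unit vector $u$ one has $\E\|(\tfrac{1}{\sqrt n}X_n)^j u\|^2\to 1$, because the leading contribution is the identity pairing of the two paths. The Catalan/semicircle behaviour arises for Wigner matrices, where the paths can be self-paired by backtracking; it is the iid structure (all pure nontrivial moments of the circular law vanish, as remarked in the paper), not the flatness of $\phi_n$, that collapses the Gram matrix to the identity. So the ``delicate feature of the flat vector'' framing is misleading. Second, the argument from coefficient convergence to zero-process convergence (your Hurwitz step) requires uniform-on-compacta convergence in distribution, which in turn needs the Laurent tails controlled uniformly on compacta; the paper's route (Skorokhod coupling plus Green's theorem comparing $\log|f|$ to $\log|g|$, with $L^2$ uniform integrability of $\log|f|$ from the lower tail estimate Lemma~\ref{ltail}) requires less pointwise regularity. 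Relatedly, part (i) as you sketch it controls the upper tail of $\sup_{|z|=r}|p_n|$; that indeed pairs naturally with Jensen. But the paper's Lemma~\ref{ltail} — a least-singular-value plus $\eps$-net concentration estimate on $\P(|f(z)|\le\delta)$ — is not optional: it is needed again for the $L^2$-uniform-integrability bound on $\log|f|$ in part (ii). Your sketch does not supply anything playing that role, so as written there is a gap in the passage from distributional convergence of the zero process to convergence of correlation functions that uniform integrability of $N_\eps$ alone does not close.
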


\begin{remark}  The requirement that all the coefficients of $X_n$ are real is a natural one from the neural net application\cite{rajan}.  However, in view of the better developed theory for complex Gaussian power series\cite{hough}, it may in fact be more natural from a theoretical perspective to consider the case when the $X_n$ are complex valued, e.g. if the atom distribution is complex Gaussian.  In that case, there is a similar result to Theorem \ref{ull} but with the coefficients $g_j$ of the random Laurent series $g(z)$ given by complex Gaussians rather than real Gaussians; we omit the details.  The requirement that $x$ be either Gaussian or bounded is a technical one, so that one may apply concentration inequalities; it may certainly be relaxed substantially.
\end{remark}

Theorem \ref{ull} is illustrated in Figure \ref{Thm1.10Fig}.

\begin{figure}
\begin{center}
\scalebox{.6}{\includegraphics{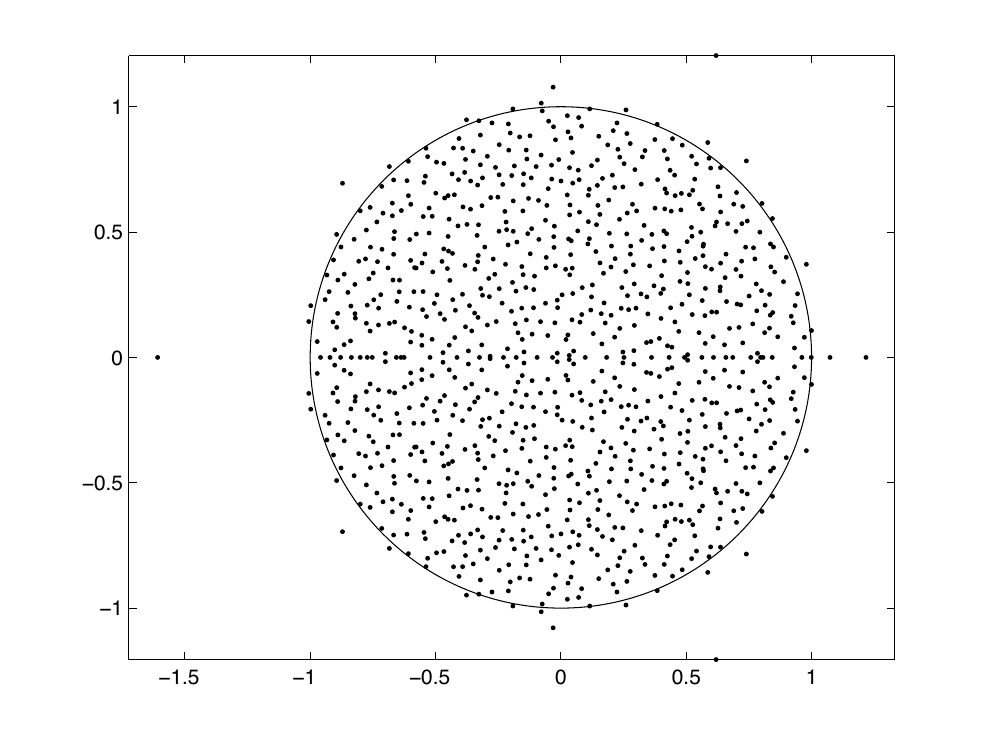}}
\end{center}
\caption{This figure shows the eigenvalues of a single $1000$ by $1000$ iid
random matrix with atom
distribution $x$ defined by $\Pr(x=1) = \Pr(x=-1) = 1/2$ which was
perturbed by adding the random matrix $\mu B_n$ as defined after \eqref{ann},
where $\mu=2$ and $p=1/4$.  (Figure by Phillip Wood.)}
\label{Thm1.10Fig}
\end{figure}

\subsection{Zero-sum matrices}  

Next, we consider a different low-rank perturbation of an iid matrix model, in which the row sums of the matrix are forced to equal zero.  Introduce the orthogonal projection matrix
$$ P_n = 1 - \phi_n \phi_n^* = ( \delta_{ij} - \frac{1}{n} )_{1 \leq i,j \leq n};$$
thus $P_n$ is the orthogonal projection onto the hyperplane $\phi_n^\perp = \{ (x_1,\ldots,x_n) \in \C^n: x_1 + \ldots + x_n = 0 \}$.

Our first result is that the presence of this projection does not affect the circular law, nor does it create any outliers:

\begin{theorem}\label{xp}  
Let $X_n$ be an iid random matrix.  Then $\mu_{\frac{1}{\sqrt{n}} X_n P_n}$ converges both in probability and almost surely to the circular measure $\mu_c$, and almost surely one has $\rho( \frac{1}{\sqrt{n}} X_n P_n ) = 1 + o(1)$.
\end{theorem}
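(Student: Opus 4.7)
The key observation is that $\tfrac{1}{\sqrt{n}} X_n P_n = M - v\phi_n^*$, where $M := \tfrac{1}{\sqrt{n}} X_n$ and $v := M\phi_n$, is a rank-one perturbation of $M$, with $\|v\|^2 = 1 + o(1)$ almost surely by the law of large numbers applied to the row sums of $X_n$. Both conclusions should follow by leveraging this structure, together with the scalar-equation / determinant identity technique used to prove Theorem \ref{small-rank}.

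For the bulk, I would run Girko's Hermitization: for any smooth compactly supported $F$,
\[
\int F\, d\mu_{MP_n} - \int F\, d\mu_M = \frac{1}{4\pi n}\int \Delta F(z)\bigl[\log\det H_z(MP_n) - \log\det H_z(M)\bigr]\, d^2 z,
\]
with $H_z(A) := (A-zI)(A-zI)^*$. Because $MP_n - M$ has rank one, the singular values of $MP_n - zI$ and $M - zI$ interlace by Weyl's inequalities, so the bracketed quantity telescopes to $O(\log\sigma_{\max}(M-zI) + |\log\sigma_{\min}(M-zI)|) = O(\log n)$ almost surely, using Theorem \ref{noout} for the upper bound and the standard least singular value estimates for shifted iid matrices (Tao--Vu, Rudelson--Vershynin) for the lower bound. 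Dividing by $n$ and integrating yields an $o(1)$ error, so $\mu_{MP_n}$ shares its limit with $\mu_M$, namely $\mu_c$; the lower bound $\rho(MP_n) \ge 1 - o(1)$ is then immediate from the positive mass $\mu_c$ places near the unit circle.

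For the spectral radius upper bound, note first that $MP_n\phi_n = 0$, so $0$ is always an eigenvalue with eigenvector $\phi_n$ (harmless). For any other eigenvalue $\lambda$ with eigenvector $u$ we have $(M - \lambda I)u = v(\phi_n^* u)$. Either $\phi_n^* u = 0$, forcing $\lambda \in \operatorname{spec}(M) \subset \{|z| \le 1 + o(1)\}$ by Theorem \ref{noout}, or, after normalising $\phi_n^* u = 1$ and writing $v = (M-\lambda I)\phi_n + \lambda \phi_n$, the consistency relation $\phi_n^*(M-\lambda I)^{-1}v = 1$ reduces to the scalar equation
\[
\lambda\,\phi_n^*(M-\lambda I)^{-1}\phi_n = 0.
\]
It thus suffices to rule out zeros of $\phi_n^*(M-\lambda I)^{-1}\phi_n$ in $\{|\lambda| \ge 1+\eps\}$ for each $\eps > 0$. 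Expanding a Neumann series,
\[
\phi_n^*(M-\lambda I)^{-1}\phi_n = -\frac{1}{\lambda} - \sum_{k=1}^\infty \lambda^{-k-1}\, \phi_n^* M^k \phi_n,
\]
I would establish $\phi_n^* M^k \phi_n \to 0$ almost surely for each fixed $k$ by computing the moments of $n^{-k/2-1}\mathbf{1}^*X_n^k\mathbf{1}$: only directed length-$k$ walks in which every directed edge appears at least twice contribute in expectation, and (because $x_{ij}$ and $x_{ji}$ are independent) their number is $o(n^{k/2+1})$. The tail $\sum_{k > K}$ is absorbed using the operator norm bound $\|M\|_{op} \le 1+o(1)$ from Theorem \ref{noout}, so choosing $K$ large enough yields $\phi_n^*(M-\lambda I)^{-1}\phi_n = -1/\lambda + o(1)$ uniformly on $|\lambda| \ge 1+\eps$, which is bounded away from zero.

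The main obstacle is the combinatorial moment calculation showing $\phi_n^* M^k \phi_n \to 0$ almost surely; the expectation and variance of $n^{-k/2-1}\mathbf{1}^* X_n^k\mathbf{1}$ each require careful walk-counting that exploits the directedness of the iid model (unlike in the Wigner setting), followed by a Borel--Cantelli step and union bound over the finitely many $k \le K$, with the operator-norm estimate taking care of the tail. The Hermitization step for the bulk is by now routine, modulo the standard least singular value input.
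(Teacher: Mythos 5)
Your spectral radius argument is essentially sound and lands in the same place as the paper's: the scalar equation $\lambda\,\phi_n^*(M-\lambda I)^{-1}\phi_n = 0$ you derive is precisely the $k=1$ instance of the paper's Lemma \ref{eigencrit} (equivalently, of the identity \eqref{det-ident}), and after expanding in a Neumann series you need $\phi_n^* M^j\phi_n = o(1)$ almost surely, which is exactly Lemma \ref{coef} applied with $u_n = v_n = \phi_n$. (The paper reaches the same quantity by expanding $(MP_n)^m$ and noting $(1-P_n)M^j(1-P_n) = \phi_n(\phi_n^*M^j\phi_n)\phi_n^*$.) Your proposal to re-derive this via a moment/walk count is the proof of Lemma \ref{coef} in the paper, so this part is fine, modulo the caveat that both Theorem \ref{noout} and Lemma \ref{coef} require a finite fourth moment which you do not mention.

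There is, however, a genuine gap in your treatment of the bulk. You claim that, by interlacing, the bracketed quantity telescopes to
$O\bigl(\log\sigma_{\max}(M-zI) + |\log\sigma_{\min}(M-zI)|\bigr)$, i.e.\ that only the extreme singular values of $M-zI$ enter. That is not what interlacing gives. Writing $a_i := \log\sigma_i(M-zI)$ and $b_i := \log\sigma_i(MP_n-zI)$ (decreasing in $i$), the rank-one interlacing $a_{i+1}\le b_i\le a_{i-1}$ yields
\[
\log b_n - \log a_1 \;\le\; \sum_{i=1}^n (\log b_i - \log a_i) \;\le\; \log b_1 - \log a_n,
\]
so the uncontrolled term is $\log b_n = \log\sigma_{\min}(MP_n - zI)$, \emph{not} $\log\sigma_{\min}(M-zI)$. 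Interlacing gives you an upper bound $\sigma_n(MP_n - zI)\le\sigma_{n-1}(M-zI)$ but no lower bound. And a polynomial lower bound on $\sigma_n(MP_n - zI)$ does \emph{not} follow from the iid least singular value estimates you invoke, because $X_n P_n$ is not ``iid plus deterministic'': the projection $P_n$ multiplies $X_n$, so Tao--Vu / Rudelson--Vershynin do not apply directly. This is precisely the hard step, and the paper spends most of Section \ref{zrc} on it: one drops a dimension by eliminating $P_n$ (using the constraint $\phi_n^*v \approx 0$ for an almost-null vector $v$), conditions on the last row and column of $X_n$, and reduces to a least singular value bound for an $(n-1)\times(n-1)$ iid matrix plus a (conditionally) deterministic matrix of polynomial size, to which the standard estimates do apply. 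Without such an argument your bound on the bracketed quantity, and hence your replacement-principle step, does not go through.
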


We prove this theorem in Section \ref{zrc}, using the machinery from \cite{tv-circular2}.  The main difficulty is to ensure that the least singular value of $\frac{1}{\sqrt{n}} X_n P_n - z$ is well controlled for any fixed $z$, but this can be achieved dropping one dimension (and freezing some of the entries of $X_n$) to eliminate the role of $P_n$ (at the cost of replacing the deterministic matrix $z = zI$ by a more complicated matrix).  This result is somewhat analogous to the result of \cite{chafai} establishing the circular law for random Markov matrices (under an additional bounded density hypothesis on the atom distribution), although the situation is more complicated in that case because a slightly nonlinear transformation is required in order to convert a iid random matrix to a Markov matrix, in contrast to the simple linear transformation $X_n \mapsto X_n P_n$ required to make the rows sum to zero.

Suppose that $\psi_n$ is a vector orthogonal to $\phi_n$, then $\psi_n^* = \psi_n^* P_n$ and $P_n \phi_n = 0$.  Then from two applications of \eqref{det-ident} we have
\begin{align*}
\det\left( 1 + z (\frac{1}{\sqrt{n}} X_n P_n + \phi_n \psi_n^*) \right) &= \det\left( 1 + z P_n (\frac{1}{\sqrt{n}} X_n P_n + \phi_n \psi_n^*) \right) \\
&= \det\left( 1 + z P_n \frac{1}{\sqrt{n}} X_n P_n \right) \\
&= \det\left( 1 + z \frac{1}{\sqrt{n}} X_n P_n \right).
\end{align*}
We thus see that $\frac{1}{\sqrt{n}} X_n P_n$ and $\frac{1}{\sqrt{n}} X_n P_n + \phi_n \psi_n^*$ have the same characteristic polynomial, and thus the same ESD and the same spectral radius.  (One can also establish these facts directly without much difficulty, as was done in \cite{rajan}.)  We thus obtain

\begin{corollary}  
Let $X_n$ be an iid random matrix, and for each $n$, let $\psi_n$ be a (possibly random, and $X_n$-dependent) vector which is orthogonal to $\phi_n$.  Then $\mu_{\frac{1}{\sqrt{n}} X_n P_n + \phi_n \psi_n^*}$ converges both in probability and almost surely to the circular measure $\mu_c$, and almost surely one has $\rho( \frac{1}{\sqrt{n}} X_n P_n + \phi_n \psi_n^* ) = 1 + o(1)$.
\end{corollary}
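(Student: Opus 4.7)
The plan is to reduce the corollary to Theorem~\ref{xp} by showing that adding $\phi_n \psi_n^*$ to $\frac{1}{\sqrt{n}} X_n P_n$ does not alter the characteristic polynomial at all, on a realization-by-realization basis. If this is shown, then the two matrices have identical eigenvalue multisets, hence identical ESDs and spectral radii for every sample, and the almost sure / in probability statements transfer from Theorem~\ref{xp} verbatim. In particular, the fact that $\psi_n$ is allowed to be random and even $X_n$-dependent causes no difficulty, since the identity I intend to establish is purely algebraic.

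To establish the characteristic polynomial identity, I would exploit the two algebraic consequences of $\psi_n \perp \phi_n$, namely $\psi_n^* P_n = \psi_n^*$ (which, combined with $P_n^2 = P_n$, gives $M := \frac{1}{\sqrt{n}} X_n P_n + \phi_n \psi_n^* = M P_n$) and $P_n \phi_n = 0$ (which gives $P_n M = P_n \frac{1}{\sqrt{n}} X_n P_n$). Two applications of the determinantal identity~\eqref{det-ident} then suffice: first,
\begin{equation*}
\det(I + zM) = \det(I + zMP_n) = \det(I + zP_n M) = \det\left(I + z P_n \tfrac{1}{\sqrt{n}} X_n P_n\right),
\end{equation*}
and second, pushing the leading $P_n$ around,
\begin{equation*}
\det\left(I + z P_n \tfrac{1}{\sqrt{n}} X_n P_n\right) = \det\left(I + z \tfrac{1}{\sqrt{n}} X_n P_n^2\right) = \det\left(I + z \tfrac{1}{\sqrt{n}} X_n P_n\right).
\end{equation*}
Setting $z = -1/\lambda$ and multiplying through by $\lambda^n$ translates this into equality of the characteristic polynomials in $\lambda$.

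There is essentially no serious obstacle: everything is driven by the two algebraic identities $M = MP_n$ and $P_n \phi_n = 0$, which are exactly what the hypothesis $\psi_n \perp \phi_n$ provides. The only potentially distracting subtlety, that $\psi_n$ can depend on $X_n$, is handled automatically because the displayed identities hold deterministically per sample; no independence, measurability, or moment hypothesis on $\psi_n$ beyond the orthogonality is ever invoked. Once the characteristic polynomials are matched, the conclusion is obtained by citing Theorem~\ref{xp} directly.
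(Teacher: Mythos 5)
Your proposal is correct and follows essentially the same approach as the paper: two applications of the identity $\det(1+AB)=\det(1+BA)$, together with $\psi_n^* P_n = \psi_n^*$ and $P_n\phi_n = 0$, show that $\frac{1}{\sqrt{n}}X_n P_n + \phi_n\psi_n^*$ and $\frac{1}{\sqrt{n}}X_n P_n$ share the same characteristic polynomial, after which Theorem~\ref{xp} gives the conclusion. Your observation that the identity is purely algebraic and holds per realization, so that randomness of $\psi_n$ is irrelevant, matches the paper's intent exactly.
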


In particular, no outliers are created no matter how large $\psi_n$ is (or how aligned it is with $\phi_n$).  These results were established in \cite{rajan} in the gaussian case.  This is in sharp contrast to the situation in Theorem \ref{ull} for the model \eqref{ann0}, which is similar to the matrix $\frac{1}{\sqrt{n}} X_n P_n + \phi_n \psi_n^*$, but without the $P_n$ projection.

\begin{remark}  Our results here are only effective in the region where the spectral parameter $z$ has magnitude larger than that of the spectral radius.  It would be of interest to determine what happens in models where the eigenvalue law of the base matrix $\frac{1}{\sqrt{n}} X_n$ is not governed by a circular law, but by another law whose support does not occupy the entire disk given by the spectral radius (e.g. matrices whose ESD is concentrated in an annulus).  In the covariance matrix case, results in this direction appear in \cite{bai-exact}, \cite{baik}.
\end{remark}

\subsection{Acknowledgments}

This work was conducted during a workshop at the American Institute of Mathematics.  We thank Larry Abbott for raising these questions.  After posing this question at the workshop, Alice Guionnet provided the key insight, namely to reduce matters to studying coefficients of the resolvent of $\frac{1}{\sqrt{n}} X_n$, while Percy Deift emphasised the significance of the identity \eqref{det-ident} to questions of this type (and indeed, this identity is crucial in order to efficiently handle the higher rank case $k > 1$).  The author also thanks Sasha Soshnikov and Phillip Wood for useful discussions, Florent Benaych-Georges, Djalil Chafai and Raj Rao for references, and Phillip Wood for corrections.  We are also indebted to Phillip Wood for supplying the figures for this paper.  Finally, we thank the anonymous referees for many helpful comments, corrections, and references.

\subsection{Asymptotic notation}\label{notation-sec}

Throughout this paper, $n$ is an asymptotic parameter going to infinity.  We use $o(1)$ to denote any quantity that is bounded in magnitude by an expression $c(n)$ that goes to zero as $n \to \infty$, keeping all other parameters independent of $n$ (e.g. $x, \eps, k, z$) fixed.  Similarly, we use $X=O(Y)$ or $X \ll Y$ to denote the estimate $|X| \leq CY$ where the implied constant $C$ is independent of $n$ but may depend on on parameters independent of $n$.

\section{Small low rank perturbation}\label{slrp}

We now prove Theorem \ref{small-rank}.  Fix $\eps > 0$ and $x$, $C_n$ as in that theorem.  By hypothesis, $C_n$ has rank at most $k$ for some $k=O(1)$ independent of $n$, and an operator norm of $O(1)$.  By the singular value decomposition, we can write $C_n = A_n B_n$ for some $n \times k$ and $k \times n$ matrices $A_n, B_n$, both of operator norm $O(1)$.

We have the following description\footnote{We are indebted to Alice Guionnet for proposing the $k=1$ case of this formula, as well as the basic strategy of proof used in this paper, and Percy Deift for emphasising the importance of the identity \eqref{det-ident}.} of the eigenvalues of $\frac{1}{\sqrt{n}} X_n + C_n$ in terms of a $k \times k$ determinant:

\begin{lemma}[Eigenvalue criterion]\label{eigencrit}  Let $z$ be a complex number that is not an eigenvalue of $\frac{1}{\sqrt{n}} X_n$.  Then $z$ is an eigenvalue of   
$\frac{1}{\sqrt{n}} X_n + C_n$ if and only if
\begin{equation}\label{dezn}
\det( 1 + B_n (\frac{1}{\sqrt{n}} X_n - z)^{-1} A_n ) = 0.
\end{equation}
\end{lemma}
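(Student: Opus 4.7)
The plan is to reduce the eigenvalue condition $\det(\frac{1}{\sqrt{n}} X_n + C_n - z) = 0$ to the stated $k \times k$ determinant via the identity \eqref{det-ident}. Since by hypothesis $M := \frac{1}{\sqrt{n}} X_n - z$ is invertible, I would factor
\[
\frac{1}{\sqrt{n}} X_n + C_n - z \;=\; M(1 + M^{-1} C_n) \;=\; M(1 + M^{-1} A_n B_n),
\]
so that $\det(\frac{1}{\sqrt{n}} X_n + C_n - z) = \det(M)\,\det(1 + M^{-1} A_n B_n)$, and the eigenvalue condition becomes $\det(1 + M^{-1} A_n B_n) = 0$.

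Next I would apply the determinant identity \eqref{det-ident} to the two rectangular matrices $M^{-1} A_n$ (which is $n \times k$) and $B_n$ (which is $k \times n$). This identity converts the $n \times n$ determinant into the $k \times k$ determinant
\[
\det(1 + M^{-1} A_n B_n) \;=\; \det(1 + B_n M^{-1} A_n) \;=\; \det\!\bigl(1 + B_n (\tfrac{1}{\sqrt{n}} X_n - z)^{-1} A_n\bigr),
\]
and the equivalence in \eqref{dezn} follows.

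There is no real obstacle here; this is a direct algebraic manipulation once one has the factorization above and the identity \eqref{det-ident}. The only subtlety worth noting is the hypothesis that $z$ is not an eigenvalue of $\frac{1}{\sqrt{n}} X_n$, which is exactly what is needed to guarantee that $M$ is invertible and hence that the factorization and the passage from $\det(M)\,\det(1 + M^{-1}C_n) = 0$ to $\det(1 + M^{-1}C_n) = 0$ are both valid.
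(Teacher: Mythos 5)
Your proof is correct and follows essentially the same route as the paper: factor out $M = \frac{1}{\sqrt{n}} X_n - z$ to reduce the eigenvalue condition to $\det(1 + M^{-1} A_n B_n) = 0$, then apply the identity \eqref{det-ident} to collapse to the $k \times k$ determinant. Nothing is missing.
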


\begin{proof}  Clearly, $z$ is an eigenvalue of $\frac{1}{\sqrt{n}} X_n + C_n$ if and only if
$$ \det( \frac{1}{\sqrt{n}} X_n + C_n - z ) = 0.$$
By hypothesis, $\frac{1}{\sqrt{n}} X_n - z$ is invertible and $C_n = A_n B_n$, so we may rewrite this equation as
$$ \det( 1 + (\frac{1}{\sqrt{n}} X_n - z)^{-1} A_n B_n ) = 0.$$
The claim now follows from \eqref{det-ident}.
\end{proof}

\begin{remark}\label{explicit} The above argument in fact shows that
\begin{align*}
\det( 1 + B_n (\frac{1}{\sqrt{n}} X_n - z)^{-1} A_n ) &= \frac{ \det( \frac{1}{\sqrt{n}} X_n + C_n - z ) }{\det( \frac{1}{\sqrt{n}} X_n - z) } \\
&= \frac{ \prod_{j=1}^n (\lambda_j(\frac{1}{\sqrt{n}} X_n + C_n) - z) }{ \prod_{j=1}^n (\lambda_j(\frac{1}{\sqrt{n}} X_n ) - z) }
\end{align*}
whenever the denominator is nonzero.  We are indebted to Alice Guionnet for suggesting the use of this type of criterion, versions of which also appear in \cite{agg}, \cite{baiyao}, \cite{cdf}, \cite{br}, \cite{bgm}, \cite{bgm2}.  For instance, the $k=1$ case of this criterion appears explicitly in \cite{br}.  In \cite{agg} the expression in this identity (which is stated there in the case of symmetric matrices) is referred to as the \emph{modified Weinstein determinant}.  (We thank Raj Rao for this reference.)
\end{remark}

Introduce the functions
\begin{align*}
f(z) &:= \det( 1 + B_n (\frac{1}{\sqrt{n}} X_n - z)^{-1} A_n ) \\
g(z) &:= \det( 1 + B_n (- z)^{-1} A_n ).
\end{align*}
These are both meromorphic functions that are asymptotically equal to $1$ at infinity, with $g$ being a rational function of degree at most $k$ with bounded coefficients.  Lemma \ref{eigencrit} tells us that outside of the spectrum of $\frac{1}{\sqrt{n}} X_n$, the zeroes of $f(z)$ agree with the eigenvalues of $\frac{1}{\sqrt{n}} X_n + C_n$.  An inspection of the argument also reveals that the multiplicity of a given such eigenvalue is equal to the degree of the corresponding zero of $f$.  Similarly, replacing $\frac{1}{\sqrt{n}} X_n$ by the zero matrix in Lemma \ref{eigencrit}, we see that outside of the origin, the zeroes of $g$ are precisely the eigenvalues of $C_n$ (counting multiplicity).  Indeed, from \eqref{det-ident} one has
$$ g(z) = \prod_{i=1}^k \left(1 - \frac{\lambda_i(C_n)}{z}\right),$$
where $\lambda_1(C_n),\ldots,\lambda_k(C_n)$ are the $k$ non-trivial eigenvalues of $C_n$ (some of which may be zero), including of course the 
$j$ eigenvalues $\frac{1}{\sqrt{n}} X_n$ of magnitude at least $1+3\eps$.

By Theorem \ref{noout}, we see that almost surely, the spectrum of $\frac{1}{\sqrt{n}} X_n$ is contained in the disk $\{ z \in \C: |z| \leq 1+\eps \}$ for sufficiently large $n$.  In view of Rouche's theorem (or the argument principle), together with the fact that the coefficients of $g$ are bounded, it then suffices to show that the quantity
$$ \sup_{|z| \geq 1+2\eps} |f(z)-g(z)|$$
converges almost surely to zero.  Since $k$ is fixed and $B_n, A_n$ are bounded in operator norm, it suffices to show that
$$ \sup_{|z| \geq 1+2\eps} \| B_n ((\frac{1}{\sqrt{n}} X_n - z)^{-1} - (-z)^{-1}) A_n \|_{op}$$
converges almost surely to zero.

By Theorem \ref{noout}, we almost surely have
$$ \left\| (\frac{1}{\sqrt{n}} X_n)^m \right\|_{op} = m+1 + o(1)$$
for each $m \geq 1$.  In particular, there exists $m_0 > 0$ such that
$$ \left\| (\frac{1}{\sqrt{n}} X_n)^{m_0} \right\|_{op} \leq (1+\eps)^{m_0}$$
for sufficiently large $n$, which also implies that one has
$$ \left\| (\frac{1}{\sqrt{n}} X_n)^{m} \right\|_{op} \leq K (1+\eps)^{m}$$
for all $n, m \geq 1$ and some almost surely finite random variable $K$.  This ensures that the Neumann series
$$ (\frac{1}{\sqrt{n}} X_n - z)^{-1} - (-z)^{-1} = - \sum_{m=1}^\infty z^{-m-1} \left(\frac{1}{\sqrt{n}} X_n\right)^m$$
is absolutely convergent in the operator norm, uniformly in both $n$ and $z$, when $n$ is sufficiently large and $|z| \geq 1+2\eps$.  By the dominated convergence theorem, it thus suffices to show that the $k \times k$ matrix
$$ B_n (\frac{1}{\sqrt{n}} X_n)^m A_n$$
converges almost surely to zero for each fixed $m \geq 1$.  Breaking $B_n$ and $A_n$ into components, it suffices to show the following claim:

\begin{lemma}[Coefficient bound]\label{coef}  Let $X_n$ be an iid random matrix whose atom distribution has finite fourth moment.  Then
\begin{equation}\label{uvn}
 \langle (\frac{1}{\sqrt{n}} X_n)^m u_n, v_n \rangle = o(1)
\end{equation}
almost surely for each fixed $m \geq 1$ and any fixed (deterministic) sequence of unit vectors $u_n, v_n \in \C^n$.
\end{lemma}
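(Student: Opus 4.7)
The plan is to prove \eqref{uvn} by the moment method applied directly to
$$Z_n := \langle (n^{-1/2} X_n)^m u_n, v_n\rangle = n^{-m/2} \sum_{i_0,\ldots,i_m} \bar v_{n,i_m}\, u_{n,i_0}\, x_{i_m i_{m-1}}\cdots x_{i_1 i_0},$$
viewed as a sum over length-$m$ paths $i_0,\ldots,i_m$ in $\{1,\ldots,n\}$. First I would establish $\E|Z_n|^2 = O(1/n)$, which already yields convergence in probability; then I would boost to almost sure convergence via truncation, higher moments and Borel--Cantelli.

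For the second moment, I expand $\E|Z_n|^2$ as a double sum over two paths $(i_0,\ldots,i_m)$ and $(j_0,\ldots,j_m)$. Using $\E x_{ij}=0$ and $\E|x_{ij}|^2=1$, the expectation of the product of $2m$ entries is nonzero only when each distinct matrix entry appearing in either path appears at least twice. The dominant contribution is the ``diagonal'' pairing $i_k=j_k$ for all $k$, giving
$$n^{-m}\sum_{i_0,\ldots,i_m} |u_{n,i_0}|^2|v_{n,i_m}|^2 = n^{-m}\cdot n^{m-1}\cdot \|u_n\|_2^2\|v_n\|_2^2 = n^{-1}.$$
Every other pairing either introduces a cycle that reduces the number of free interior vertices, or fails to identify $i_0=j_0$ or $i_m=j_m$ (in which case the endpoint coefficient sums are controlled by $\|u_n\|_2^2=\|v_n\|_2^2=1$ rather than growing with $n$); a routine graph-theoretic count then shows every such term is also $O(1/n)$, with the fourth moment hypothesis absorbing the contribution of edges appearing with multiplicity $3$ or $4$.

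To upgrade to almost sure convergence, Chebyshev alone is insufficient, so I would first truncate: replace each $x_{ij}$ by a bounded, approximately mean-zero, unit-variance $\tilde x_{ij}$ obtained by cutting off at a threshold $T_n = n^{1/4}/\log n$ (and recentering). Since $\E|x|^4<\infty$, Markov and Borel--Cantelli show that $X_n$ and the truncated matrix agree in all but finitely many entries almost surely, so $Z_n - \tilde Z_n\to 0$ almost surely. With the bounded $\tilde x_{ij}$, the same path expansion yields $\E|\tilde Z_n|^{2k}=O_k(n^{-k})$ for each fixed $k\ge 1$; taking $k=2$ and applying Chebyshev gives the summable tail bound $\P(|\tilde Z_n|>\delta)=O_\delta(n^{-2})$, and Borel--Cantelli then forces $\tilde Z_n\to 0$ almost surely.

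The main obstacle is the higher-moment bound $\E|\tilde Z_n|^{2k}=O_k(n^{-k})$: one must verify that across $2k$ paths of length $m$, the combined constraints of (i) every edge being paired at least twice and (ii) endpoints being weighted by unit vectors (so that free endpoint sums contribute $O(1)$ rather than $O(n)$) reduce the effective number of free indices enough to produce the target $n^{-k}$ scaling in the dominant pairings, with all other pairings contributing strictly less. This is a standard but slightly delicate combinatorial exercise, analogous to the tree/forest pairing analyses in the moment-method proofs of the semicircular and circular laws, but complicated here by the fact that the endpoint vectors $u_n, v_n$ are generic unit vectors rather than being identified by a trace.
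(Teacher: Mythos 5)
Your overall strategy --- path-counting moments followed by truncation to bounded entries --- is the same family of ideas as the paper's, but the execution differs in two ways worth noting. For the second moment the paper does not classify pairings directly: it first treats the Gaussian case, where left/right orthogonal invariance makes $\E|\langle(\frac{1}{\sqrt n}G_n)^m u_n,v_n\rangle|^2$ independent of the choice of unit vectors, so averaging over the standard basis collapses it to $\frac{1}{n^2}\E\tr[(\frac{1}{\sqrt n}G_n)^m((\frac{1}{\sqrt n}G_n)^m)^*]=O(1/n)$; it then shows the general second moment differs from the Gaussian one by $O(1/n)$ via a Lindeberg-type expansion in which the only surviving tuples generate at most $m-1$ distinct edges. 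This trick spares one from organizing the full pairing combinatorics. Your further step of pushing to $\E|\tilde Z_n|^{2k}=O(n^{-k})$ (taking $k=2$) and invoking Borel--Cantelli is the right move for an almost-sure statement, and on this point your write-up is actually more explicit than the paper's, which presents only the $O(1/n)$ second-moment bound.

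Your truncation step, however, contains a genuine error. You truncate entry-wise at $T_n=n^{1/4}/\log n$ and assert that ``Markov and Borel--Cantelli show that $X_n$ and the truncated matrix agree in all but finitely many entries almost surely.'' The expected number of entries of the $n\times n$ matrix exceeding $T_n$ is of order $n^2\,\P(|x|>T_n)\asymp n^2\,\E|x|^4/T_n^4\asymp n(\log n)^4\to\infty$, so in fact an unbounded number of entries are altered, and the sum $\sum_n\P(\exists\,i,j\le n:|x_{ij}|>T_n)$ diverges, so Borel--Cantelli gives nothing. Raising $T_n$ enough to make the exceedance count bounded destroys the bounded-entry hypothesis your higher-moment combinatorics relies on; lowering it only makes the count worse. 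The correct reduction, and the one the paper uses, is to truncate the atom distribution at a level $T$ independent of $n$ (then recentre and rescale) and invoke the Bai--Yin theory (\cite{BY}) to control the operator norm of the tail matrix, concluding that $\|\frac{1}{\sqrt n}X_n-\frac{1}{\sqrt n}\tilde X_n\|_{op}$ is almost surely small (uniformly in $n$ once $T$ is large), whence $Z_n-\tilde Z_n$ is small by a telescoping estimate together with the operator norm bound of Theorem \ref{noout}. This is a nontrivial random-matrix input, not an elementary Markov/Borel--Cantelli application, and your proposal cannot close the truncation gap without it.
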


We now prove the lemma.\footnote{We thank Jean Rochet for pointing out an error in a previous version of this argument.}

The atom distribution $x$ is currently only assumed to have finite fourth moment.  However, a standard truncation argument (using the results of \cite{BY} to control the contribution of the tail of $x$) shows that we may almost surely approximate $\frac{1}{\sqrt{n}} X_n$ to arbitrary accuracy in operator norm by an iid matrix in which the atom distribution is in fact bounded. As such, it will suffice to prove the lemma under the additional assumption that $x$ is bounded.  In particular, all moments of $x$ are now finite: $\E |x|^j = O(1)$ for all fixed $j$.

By diagonalising the covariance matrix of $\Re(x)$ and $\Im(x)$, we may assume (after a phase rotation) that $\Re(x)$ and $\Im(z)$ have zero covariance, and have variances $\sigma^2$ and $1-\sigma^2$ respectively for some $0 \leq \sigma^2 \leq 1$.  Next, by splitting $u_n, v_n$ into real and imaginary parts and renormalising, we may assume without loss of generality that $u_n, v_n$ have real coefficients.  Finally, by orthogonal decomposition of $v_n$, we may assume that $v_n$ is either equal to $u_n$ for all $n$, or is orthogonal to $u_n$ for all $n$.

We decompose
$$ \langle (\frac{1}{\sqrt{n}} X_n)^m u_n, v_n \rangle = 
\langle \pi((\frac{1}{\sqrt{n}} X_n)^m) u_n, v_n \rangle + \frac{1}{n} \tr (\frac{1}{\sqrt{n}} X_n)^m \langle u_n, v_n \rangle$$
where $\pi$ is the projection onto trace zero matrices.

From the circular law (Theorem \ref{thi}), using Theorem \ref{noout} to control the outliers, we have
$$ \frac{1}{n} \tr (\frac{1}{\sqrt{n}} X_n)^m = o(1)$$
almost surely.  Thus it suffices to show that
$$ \langle \pi((\frac{1}{\sqrt{n}} X_n)^m) u_n, v_n \rangle  = o(1)$$
almost surely.

We now use the moment method\footnote{We thank David Renfrew and Sean O'Rourke for pointing out an error in the initial version of this manuscript, which only established Lemma \ref{coef} in probability rather than in the almost sure sense.}.  It will suffice to show that
\begin{equation}\label{q}
\E |\langle \pi((\frac{1}{\sqrt{n}} X_n)^m) u_n, v_n \rangle|^4 = O(n^{-3/2}).
\end{equation}
Indeed, this implies from Markov's inequality that $\langle \pi((\frac{1}{\sqrt{n}} X_n)^m) u_n, v_n \rangle = O( n^{-1/16} )$ with probability $1-O(n^{-5/4})$, and Lemma \ref{coef} then follows from the usual truncation argument.  The bound $O(n^{-3/2})$ is not optimal (the truth should be $O(n^{-2})$), but will suffice for this purpose.

We first deal with the model case when $x$ is normally distributed with the the normal distribution $N(0,\sigma^2)_\R + i N(0,1-\sigma^2)_\R$ (with the real and imaginary parts independent).  As is well known, in this case the ensemble $X_n$ is invariant under conjugation by orthogonal matrices.  This implies that the expression $\E |\langle (\frac{1}{\sqrt{n}} X_n)^m u_n, v_n \rangle|^4$ does not change if we simultaneously conjugate $u_n, v_n$ by an orthogonal matrix.  In particular, if $u_n,v_n$ are orthogonal, we may keep $u_n$ deterministic, but replace $v_n$ by a random unit vector $w_n$ chosen uniformly from the orthogonal complement of $u_n$ on the unit sphere, independently of $X_n$.   However, a short computation in cylindrical coordinates (or Levy's concentration of measure theorem) shows that for any deterministic vector $x$, one has $\E |\langle x,w_n\rangle|^4 \ll \|x\|^4 / n^2$ if $w$ is a unit vector drawn uniformly from the orthogonal complement of $u_n$ on the sphere.   Thus we can bound the left-hand side of \eqref{q} by 
$$ O( \frac{1}{n^2} \E \|(\frac{1}{\sqrt{n}} X_n)^m u_n\|^4 ).$$
By Theorem \ref{noout}, the inner expectation is $O(1)$, and the claim \eqref{q} follows in the Gaussian case when $u_n$ and $v_n$ are orthogonal.

Now suppose that $u_n = v_n$.  By the conjugation invariance, it suffices to show that
$$
\E |\langle \pi((\frac{1}{\sqrt{n}} X_n)^m) y_n, y_n \rangle|^4 = O(n^{-3/2})$$
when $y_n$ is drawn uniformly from the unit sphere independently of $X_n$.  For fixed $X_n$, this expression depends on $y_n$ in a Lipschitz fashion with Lipschitz constant $O( \| \frac{1}{\sqrt{n}} X_n \|_{op}^{O(m)} )$, and its mean is zero since $\pi((\frac{1}{\sqrt{n}} X_n)^m)$ is trace zero.  The claim then follows from the Levy concentration of measure theorem and Theorem \ref{noout}.  This completes the proof of \eqref{q} in the Gaussian case.

To handle the non-gaussian case, it thus suffices to show that
$$ \E |\langle (\frac{1}{\sqrt{n}} X_n)^m u_n, v_n \rangle|^2 - \E |\langle (\frac{1}{\sqrt{n}} G_n)^m u_n, v_n \rangle|^2 = O(n^{-3/2}).$$
\begin{align*}
&n^{-2m} \sum_{a_0,\ldots,a_m,b_0,\ldots,b_m,c_0,\ldots,c_m,d_0,\ldots,d_m \in \{1,\ldots,n\}} \\
& \quad u_{n,a_0} \overline{v_{n,a_m}} \overline{u_{n,b_0}} v_{n,b_m} u_{n,c_0} \overline{v_{n,c_m}} \overline{v_{n,d_0}} v_{n,d_m} \\
& \quad (\E \prod_{j=1}^m x_{a_j,a_{j-1}} \overline{x_{b_j,b_{j-1}}} x_{c_j,c_{j-1}} \overline{x_{d_j,d_{j-1}}} \\
&\quad\quad - \E \prod_{j=1}^m g_{a_j,a_{j-1}} \overline{g_{b_j,b_{j-1}}} g_{c_j,c_{j-1}} \overline{g_{d_j,d_{j-1}}} )
\end{align*}
where $u_{n,i}, v_{n,i}$ are the coordinates of the unit vectors $u_n,v_n$, and $g_{ij}$ are iid copies of the complex normal distribution $N(0,\sigma^2)_\R + i N(0,1-\sigma^2)_\R$.

Consider the collection of $4m$ ordered pairs 
\begin{equation}\label{tree}
 (a_{j-1},a_j), (b_{j-1},b_j), (c_{j-1},c_j), (d_{j-1},d_j)
\end{equation}
for $j=1,\ldots,m$.  From the iid nature of $X$, and the fact that the random variables $x$ and $g$ match up to second order, we see that each summand in the above expression vanishes unless each ordered pair appears with multiplicity at least two, and at least one pair appears at least three times; in particular, there are at most $2m-1$ distinct ordered pairs.  As $x, g$ have all moments finite, we may thus bound the above expression in magnitude by
$$ O( n^{-2m} \sum_* |u_{n,a_0}| |v_{n,a_m}| |u_{n,b_0}| |v_{n,b_m}| |u_{n,c_0}| |v_{n,c_m}| |u_{n,d_0}| |v_{n,d_m}| )$$
where $\sum_*$ denotes the sum over all tuples $a_0,\ldots,a_m,b_0,\ldots,b_m,c_0,\ldots,c_m,d_0,\ldots,d_m$ such that the ordered pairs \eqref{tree} are such that each pair occurs at least twice, and at least one pair occurs three or more times; in particular
there are at most $2m-1$ distinct pairs.   Our task is now to show that
\begin{equation}\label{vsl}
\sum_* |u_{n,a_0}| |v_{n,a_m}| |u_{n,b_0}| |v_{n,b_m}| |u_{n,c_0}| |v_{n,c_m}| |u_{n,d_0}| |v_{n,d_m}| \ll n^{2m-3/2}
\end{equation}

Suppose that $a_0,\ldots,a_m,b_0,\ldots,b_m,c_0,\ldots,c_m,d_0,\ldots,d_m$ are such that \eqref{tree} is of the stated form.  Let $G$ be the unordered looped graph $G$ with edges being the unordered pairs associated to \eqref{tree} and with vertices being the elements of these pairs, and let $r$ be the number of connected components of $G$.  Then $1 \leq r \leq 4$, and $G$ has at most $2m-1$ edges and thus at most $2m+r-1$ vertices.

We consider first the contribution $\sum_{**}$ of those tuples for which $G$ has at most $2m+r-2$ vertices; this is for instance the case if $G$ contains a cycle or a looped edge, or has strictly fewer than $2m-1$ edges.  Then if one fixes $a_0,b_0,c_0,d_0$, then one has fixed at least one vertex in each component of $G$, leaving at most $2m-2$ remaining vertices.  Thus there are $O(2^{2m-2})$ choices for the remaining data $a_1,\ldots,a_m,b_1,\ldots,b_m,c_1,\ldots,c_m,d_1,\ldots,d_m$; summing over $a_0,b_0,c_0,d_0$ using the fact that $u_n$ is a unit vector yields that
$$ \sum_{**} |u_{n,a_0}|^2 |u_{n,b_0}|^2 |u_{n,c_0}|^2 |u_{n,d_0}|^2 \ll n^{2m-2}$$
and similarly
$$ \sum_{**} |v_{n,a_m}|^2 |v_{n,b_m}|^2 |v_{n,c_m}|^2 |v_{n,d_m}|^2 \ll n^{2m-2}$$
and so by Cauchy-Schwarz the contribution of the $\sum_{**}$ tuples to \eqref{vsl} is acceptable.

Now consider the contribution $\sum_{***}$ of those tuples for which $G$ has exactly $2m+r-1$ vertices, but such that two of the $a_0,b_0,c_0,d_0$ are distinct elements of a common component of $G$.  Then as in the $\sum_{**}$ case, fixing $a_0,b_0,c_0,d_0$ leaves only $O(2^{2m-2})$ choices for the remaining data, so that
$$ \sum_{***} |u_{n,a_0}|^2 |u_{n,b_0}|^2 |u_{n,c_0}|^2 |u_{n,d_0}|^2 \ll n^{2m-2}$$
while we also have the cruder bound
$$ \sum_{***} |v_{n,a_m}|^2 |v_{n,b_m}|^2 |v_{n,c_m}|^2 |v_{n,d_m}|^2 \ll n^{2m-1}$$
so by Cauchy-Schwarz this contribution is also acceptable.  Similarly if $G$ has $2m+r-1$ vertices but two of the $a_m,b_m,c_m,d_m$ are distinct elements of a common component of $G$.

The only remaining contribution $\sum_{****}$ comes from the case when $G$ has exactly $2m+r-1$ vertices, the $a_0,b_0,c_0,d_0$ agree whenever they lie in a common component of $G$, and $a_m,b_m,c_m,d_m$ agree whenever they lie in a common component of $G$.  As discussed previously, the requirement that $G$ has exactly $2m+r-1$ vertices forces $G$ to be a forest (a union of $r$ disjoint trees, with no cycles or looped edges) and to contain exactly $2m-1$ edges.  Among other things, this implies that the tuples \eqref{tree} do not contain a loop $(a,a)$, nor do these tuples contain a pair $(a,b)$ together with its reversal $(b,a)$.  If $a_0,\ldots,a_m$ and $b_0,\ldots,b_m$ (for instance) lie in the same component of $G$, then we must then $a_0=b_0$ and $a_m=b_m$, and then $a_i=b_i$ for all $0 \leq i \leq m$ (otherwise there would be a cycle, looped edge, or a pair $(a,b)$ and its reversal).  Thus each component has $m$ edges, which is only consistent with the total edge count of $2m-1$ if $r=1$ and $m=1$  But in this case the left-hand side of \eqref{vsl} simplifies to
$$ \sum_{i \neq j} |u_{n,i}|^4 |v_{n,j}|^4 $$
which is easily seen to be $O(1)$, so that \eqref{vsl} easily follows in this case. This completes the proof of Lemma \ref{coef} and hence Theorem \ref{small-rank}.

\section{Large mean}\label{nzm-sec}

Now we prove Theorem \ref{nonzero-mean}.  It suffices to show that for any fixed $\eps > 0$, almost surely one has exactly one eigenvalue of $\frac{1}{\sqrt{n}} X_n + \mu \sqrt{n} \phi_n \phi_n^*$ outside of the disk $\{ z: |z| \leq 1+2\eps\}$, with this eigenvalue occuring within $O(\eps)$ of $\mu \sqrt{n}$.

Fix $\eps$.  By Theorem \ref{noout}, almost surely there are no eigenvalues of $\frac{1}{\sqrt{n}} X_n$ outside of the disk. By Lemma \ref{eigencrit}, the eigenvalues of $\frac{1}{\sqrt{n}} X_n + \mu \sqrt{n} \phi_n \phi_n^*$ outside this disk are then precisely the solutions (counting multiplicity) to the equation $f(z) = 0$, where $f$ is the meromorphic function
$$ f(z) := 1 + \mu \sqrt{n} \left\langle (\frac{1}{\sqrt{n}} X_n - z)^{-1} \phi_n, \phi_n \right\rangle.$$
By Neumann series, we may expand
$$ f(z) = g(z) - \mu \sqrt{n} \sum_{m=1}^\infty z^{-m-1} \left\langle (\frac{1}{\sqrt{n}} X_n)^m \phi_n, \phi_n \right\rangle$$
where
\begin{equation}\label{g-def} 
g(z) := 1 - \mu \sqrt{n} / z.
\end{equation}
From Theorem \ref{noout}, we almost surely have
$$ \left\| (\frac{1}{\sqrt{n}} X_n) \right\|_{op} \leq 3$$
(say) and
$$ \left\| (\frac{1}{\sqrt{n}} X_n)^{m_0} \right\|_{op} \leq (1+\eps)^{m_0}$$
for some fixed integer $m_0$ depending only on $\eps$, and all sufficiently large $n$; this gives us the truncated Taylor expansion
$$ f(z) = g(z) - \mu \sqrt{n} \sum_{m=1}^M z^{-m-1} \left\langle (\frac{1}{\sqrt{n}} X_n)^m \phi_n, \phi_n \right\rangle
+ O\left( \frac{(1+\eps)^M}{|z|^{M+2}} \sqrt{n} \right)
$$
for any $M \geq 1$ and $|z| \geq 1+2\eps$, where the implied constant is allowed to depend on $\eps, \mu$ but not on $M$.  Applying Lemma \ref{coef}, we almost surely obtain
$$ f(z) = g(z) + o\left( \frac{\sqrt{n}}{|z|^2} \right) + O\left( \frac{(1+\eps)^M}{|z|^{M+2}} \sqrt{n} \right)
$$
for any fixed $M$ uniformly for all $|z| \geq 1+2\eps$, and thus (by letting $M$ go slowly to infinity) we almost surely have
$$ f(z) = g(z) + o\left( \frac{\sqrt{n}}{|z|^2} \right) $$
uniformly for all $|z| \geq 1+2\eps$.  From this and \eqref{g-def} we see that $f$ has no zeroes in this region except within a $o(1)$ neighbourhood of $\mu \sqrt{n}$, and from Rouche's theorem we see that there is exactly one zero of that latter type when $n$ is sufficiently large, and the claim follows.

\begin{remark}  The eigenvector corresponding to the exceptional zero can be explicitly described, by observing the identity
$$ (\frac{1}{\sqrt{n}} X_n + \mu \sqrt{n} \phi_n \phi_n^* - z) (1 - \frac{1}{z\sqrt{n}} X_n)^{-1} \phi_n = \frac{-f(z)}{z} \phi_n$$
for all non-zero $z$ outside of the spectrum of $\frac{1}{\sqrt{n}} X_n$.  In particular, if $z = \mu \sqrt{n}+o(1)$ is the outlier eigenvalue, the 
$(1 - \frac{1}{z\sqrt{n}} X_n)^{-1} \phi_n$ is the corresponding eigenvector.  From Theorem \ref{noout} and Neumann series, we see that almost surely, this eigenvector lies within $O(1/\sqrt{n})$ of $\phi_n$ in $\ell^2$ norm, and a more accurate description of this eigenvector can be given by expanding out the Neumann series further.
\end{remark}

\section{A central limit theorem}\label{clt-sec}

In Lemma \ref{coef}, we showed that the coefficients $ \langle (\frac{1}{\sqrt{n}} X_n)^m u_n, v_n \rangle$ decayed almost surely to zero.  Now we prove a more refined statement on the rate of decay, which is to Lemma \ref{coef} as the central limit theorem is to the (strong) law of large numbers.  This result will be needed to prove Theorem \ref{ull}.  For simplicity we consider only real-valued matrices; there is a complex analogue when the real and imaginary parts of $x$ have the same covariance matrix as the complex Gaussian $N(0,1)_\C$, but we will not state it here.

\begin{proposition}[Central limit theorem]\label{clt}  Let $X_n$ be an iid random matrix whose atom distribution is real-valued, has mean zeero and has all moments finite.  Let $u_n, v_n \in \R^n$ be a (deterministic) sequence of unit vectors whose coefficients $u_{n,i}, v_{n,i}$ are \emph{asymptotically delocalised} in the sense that
\begin{equation}\label{sloop}
 \sup_{1 \leq i \leq n} |u_{n,i}|, \sup_{1 \leq i \leq n} |v_{n,i}| = O(1/\sqrt{n}).
\end{equation}
Then for any fixed $m \geq 1$, the $m$ random variables
\begin{equation}\label{uvn-d}
Z_j := \sqrt{n} \left\langle (\frac{1}{\sqrt{n}} X_n)^j u_n, v_n \right\rangle
\end{equation}
for $j=1,\ldots,m$ converge jointly in distribution to the law of $m$ independent copies of the real gaussian $N(0,1)_\R$.
\end{proposition}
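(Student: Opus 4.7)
The plan is to apply the method of moments, showing that every joint moment $\E\prod_{j=1}^m Z_j^{k_j}$ converges to $\prod_{j=1}^m \E[N_j^{k_j}]$ for $N_1,\ldots,N_m$ independent copies of $N(0,1)_\R$, i.e.\ to $\prod_j (k_j-1)!!$ when all $k_j$ are even and $0$ otherwise. Since the atom distribution has all moments finite, these moments are well-defined, and the joint Gaussian is determined by its moments, so this gives joint convergence in distribution. I would begin by expanding each factor as a path sum,
$$ Z_j = n^{(1-j)/2} \sum_{i_0,\ldots,i_j} u_{n,i_0}\, v_{n,i_j}\, x_{i_j,i_{j-1}}\cdots x_{i_1,i_0},$$
so that $\E\prod_{j=1}^m Z_j^{k_j}$ becomes a weighted sum, indexed by tuples consisting of $k_j$ paths of length $j$ for each $j$, of expectations of monomials in the entries of $X_n$ against endpoint factors from $u_n, v_n$.

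By independence and the mean-zero unit-variance hypothesis, only configurations in which each ordered pair $(a,b)$ appearing as an edge occurs at least twice contribute. Following the argument of Lemma \ref{coef}, edges of multiplicity three or higher can be absorbed into an error term: such configurations use strictly fewer distinct vertices than pair-matched ones, and \eqref{sloop} gives the control over the endpoint factors needed to conclude that their combined contribution is $o(1)$. The remaining, pair-matched configurations are the heart of the argument. Among these, I would identify the dominant class as the \emph{parallel pairings}: $k_j$ must be even for each $j$, the $k_j$ paths of length $j$ are grouped into $k_j/2$ pairs, and within each pair the two paths are pointwise identical. A straightforward extension of the $\E Z_j^2$ computation then shows that each parallel pair contributes asymptotically $1$ (the $n^{j-1}$ free intermediate vertices, combined with $\sum_i u_{n,i}^2 = \sum_i v_{n,i}^2 = 1$, exactly cancel the corresponding $n^{1-j}$ prefactor), and the number of parallel pairings equals $\prod_j (k_j-1)!!$, matching the Gaussian moment.

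The main obstacle will be to verify that all other pair-matched configurations give $o(1)$ contribution. These split into (i) cross-length pairings, where an edge from a length-$j$ path is matched with an edge from a length-$j'$ path with $j\neq j'$, and (ii) non-parallel pairings within groups of equal-length paths (for instance, a "crossing" matching, possibly combined with an internal path reversal). In either case, one checks graph-theoretically that the matching constraints force at least one vertex coincidence beyond what parallel pairing imposes, so the number of free summation vertices drops by at least one and a factor of $n^{-1}$ is gained. The delocalization hypothesis \eqref{sloop} is essential here: it bounds each coordinate of $u_n, v_n$ by $O(n^{-1/2})$ and prevents the lost factor of $n^{-1}$ from being regained through an endpoint concentration in $u_n$ or $v_n$. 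Once all such non-parallel contributions are shown to be $o(1)$, the method of moments yields the joint convergence of $(Z_1,\ldots,Z_m)$ to $m$ independent standard real Gaussians.
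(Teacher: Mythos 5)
Your proposal is correct and uses the same method-of-moments strategy as the paper: expand $\E\prod_j Z_j^{r_j}$ as a sum over tuples of paths, use mean zero/iid to restrict to edge configurations where each ordered pair appears at least twice, and show that identically-paired (your ``parallel'') simple paths dominate, with the matching count $\prod_j (r_j-1)!! = \prod_j \frac{r_j!}{2^{r_j/2}(r_j/2)!}$ producing the Gaussian moments. The one organizational difference: rather than classifying pairings into parallel versus cross-length/crossing and arguing separately that each non-parallel type loses a free vertex, the paper observes that doubled edges force every index to have multiplicity at least $2$, so a configuration with the maximal number $A = \tfrac12\sum_j r_j(j+1)$ of distinct indices must have every index occurring exactly twice, which (together with delocalisation \eqref{sloop} to kill the sub-maximal cases) directly forces the paths to be simple, pairwise identical, and otherwise disjoint; this sidesteps the graph-theoretic case check you flag as the ``main obstacle,'' though your version would also go through.
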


We now prove this proposition.  By (the multidimensional version of) Carleman's theorem (see e.g. \cite{bai}), it suffices to prove the moment bounds
\begin{equation}\label{zrr}
\E \prod_{j=1}^m Z_j^{r_j} = \E \prod_{j=1}^m G_j^{r_j} + o(1)
\end{equation}
for any natural numbers $r_1,\ldots,r_m$, where $G_1,\ldots,G_m$ are iid copies of the real gaussian $N(0,1)_\R$.

Fix $m,r_1,\ldots,r_m$; we allow all implied constants to depend on these quantities.  The left-hand side can be expanded as
\begin{equation}\label{summ}
 n^{-A} \sum_* \E \prod_{j=1}^m (\prod_{l=1}^{r_j} U_{a_{j,l,0}} V_{a_{j,l,j}}) (\prod_{l=1}^{r_j} \prod_{i=1}^j x_{a_{j,l,i},a_{j,l,i-1}}) 
\end{equation}
where $U_i := \sqrt{n} u_{n,i}$, $V_i := \sqrt{n} v_{n,i}$,
$$ A := \frac{1}{2} \sum_{j=1}^m r_j (j+1)$$
and $*$ ranges over all tuples of indices $a_{j,l,i} \in \{1,\ldots,n\}$ with $1 \leq j \leq m$, $1 \leq l \leq r_j$, $0 \leq i \leq j$.

From \eqref{sloop} and the bounded moment hypotheses, we see that each summand \eqref{summ} is of size $O(1)$, and so we may freely ignore up to $o(n^A)$ summands whenever desired.  

For each tuple in the sum $\sum_*$, we consider the $\sum_{j=1}^m j r_j$ ordered pairs $(a_{j,l,i-1},a_{j,l,i})$ with $1 \leq j \leq m$, $1 \leq l \leq r_j$, $1 \leq i \leq j$.  By the iid and mean zero nature of the $x_{i,j}$, we see that this sum vanishes unless each ordered pair appears with multiplicity at least two.  In particular each of the indices $a_{j,l,i}$ with $0 \leq i \leq j$ must occur with multiplicity at least $2$, leading to at most $A$ distinct indices.  If there are any fewer than $A$ distinct indices, then the contribution of this case to \eqref{summ} is $o(n^A)$, so we may assume that there are exactly $A$ distinct indices, which implies that each index $a_{j,l,i}$ occurs with multiplicity exactly two.  This implies that for $1 \leq i \leq j$, each of the $a_{j,l,i-1}$ arise exactly twice as the initial vertex of an ordered pair, and each of the $a_{j,l,i}$ arise exactly twice as the final vertex of an ordered pair.   Furthermore, the indices $a_{j,l,0}$ must be distinct from the indices $a_{j,l,i}$ with $0 < i \leq j$, and similarly the $a_{j,l,j}$ must be distinct from the indices $a_{j,l,i}$ with $i<k$, as otherwise the fact that each ordered pair appears at least twice will lead to a multiplicity of at least three at the repeated index.  This implies that the $\sum_j r_j$ paths $(a_{j,l,0},\ldots,a_{j,l,j})$ are simple paths, which each occur with multiplicity two but are otherwise disjoint.  The total contribution of this case to \eqref{summ} is then
$$ n^{-A} \sum_{**} \prod_{j=1}^m (\prod_{l=1}^{r_j} U_{a_{j,l,0}} V_{a_{j,l,j}}),$$
where the sum $\sum_{**}$ is over collections of simple paths $(a_{j,l,0},\ldots,a_{j,l,j})$ in $\{1,\ldots,n\}$ which occur with multiplicity two but are otherwise disjoint; here we use the fact that each ordered pair appears exactly twice, and that $x$ has unit variance.

In order for all paths to appear with multiplicity two, each of the $r_j$ must be even.  This already gives \eqref{zrr} when at least one of the $r_j$ is odd, so we now assume that the $r_j$ are all even. There are $\prod_{j=1}^m \frac{r_j!}{2^{r_j/2} (r_j/2)!}$ different ways in which the paths can be matched up to multiplicity two.  Once one fixes such a matching, there are $R := \frac{1}{2} \sum_{j=1} r_j$ initial vertices $b_1,\ldots,b_R$ of paths, and $R$ final vertices $c_1,\ldots,c_R$, all distinct from each other; if one fixes these vertices, then one has $(1+o(1)) n^{\sum_{j=1}^m (j-1) r_j/2} = (1+o(1)) n^{A - 2R}$ ways to choose the remaining paths.  This gives a total contribution to \eqref{summ} of
$$ (1+o(1)) n^{-2R} \prod_{j=1}^m \frac{r_j!}{2^{r_j/2} (r_j/2)!} \sum_{b_1,\ldots,b_R,c_1,\ldots,c_R \hbox{ distinct}}
\prod_{r=1}^R U_{b_r}^2 V_{c_r}^2.$$
We add back in the contributions in which some of the $b_1,\ldots,b_R,c_1,\ldots,c_R$; this only affects the sum by $o(n^{2R})$, which is acceptable.   We are left with
$$ (1+o(1)) n^{-2R} \prod_{j=1}^m \frac{r_j!}{2^{r_j/2} (r_j/2)!} \sum_{b_1,\ldots,b_R,c_1,\ldots,c_R \in \{1,\ldots,n\}}
\prod_{r=1}^R U_{b_r}^2 V_{c_r}^2;$$
since the $U_b$ and $V_c$ square-sum to $n$, this simplifies to
$$ (1+o(1)) \prod_{j=1}^m \frac{r_j!}{2^{r_j/2} (r_j/2)!} $$
and \eqref{zrr} follows from the standard computation
$$ \E G^r = \frac{r!}{2^{r/2} (r/2)!}$$
for $r$ even.

\section{Large non-selfadjoint perturbation}\label{ull-sec}

We now prove Theorem \ref{ull}.  It suffices to work in the exterior region $\Omega := \{ z: |z| \geq 1+4\eps \}$ for a fixed $\eps > 0$.  Henceforth all implied constants may depend on $\eps$, $p$, $\mu$.

\subsection{Crude upper bound}

We first show the first part of the theorem, namely that $\sup_n \E N_{4\eps}^m < \infty$ for all $m \geq 1$.  Fix $m$; we allow all implied constants to depend on $m$.  Our task is now to show that $\E N_{4\eps}^m = O(1)$ for all sufficiently large $n$.

From Theorem \ref{noout} we know that the spectral radius of $\frac{1}{\sqrt{n}} M_n$ is at most $1+\eps$ with overwhelming probability.  Using the trivial bound $N_{4\eps} \leq n$, we see that the tail event when the spectral radius exceeds $1+\eps$ thus gives a negligible contribution to $\E N_{4\eps}^m$ and will thus be ignored.

Conditioning on the event that the spectral radius is at most $1+\eps$, we may then apply Lemma \ref{eigencrit} to conclude that the eigenvalues of $A_n$ in $\Omega$ are precisely the zeroes in $\Omega$ of the random analytic function
\begin{equation}\label{fdef}
 f(z) := 1 + \mu \sqrt{n} \left\langle (\frac{1}{\sqrt{n}} X_n - z I)^{-1} \phi_n, \psi_n \right\rangle.
\end{equation}
In particular, $N_{4\eps}$ is the number of zeroes of $f$ in $\Omega$.

The function $f(1/z)$ is analytic in the disk $\{ w \in \C: |w| < \frac{1}{1+\eps} \}$ and equals $1$ at the origin, with $N_{4\eps}$ zeroes in the region $\{ w \in \C: |w| < \frac{1}{1+4\eps}\}$. Applying Jensen's formula to this function, we conclude the upper bound
$$ N_{4\eps} \ll \int_{|z| = r} \log_+ \frac{1}{|f(z)|}\ |dz|$$
for any radius $r$ between $1+2\eps$ and $1+3\eps$ (note that we allow implied constants to depend on $\eps$), where $\log_+ x := \max(\log x, 0)$ and $|dz|$ is arclength measure.  Averaging, we conclude that
\begin{equation}\label{neps}
 N_{4\eps} \ll \int_{1+2\eps \leq |z| \leq 1+3\eps} \log_+ \frac{1}{|f(z)|} \ d^2z. 
\end{equation}

It will thus suffice to establish the bound
\begin{equation}\label{loga}
 \E \int_K \log_+^m \frac{1}{|f(z)|} \ll 1
\end{equation}
for any fixed $m \geq 1$ and any compact subset $K$ of the annulus $|z| > 1+\eps$ (allowing implied constants to depend on $m,K$ of course).

We now pause to regularise the logarithm slightly, as this will come in handy later.  By Remark \ref{explicit}, the function $\log \frac{1}{|f(z)|}$ is a linear combination of $O(n)$ terms of the form $\log |z-z_0|$ for various complex numbers $z_0$.  As such we have a crude upper bound of the form
\begin{equation}\label{2e}
 \int_K \log_+^{m+1} \frac{1}{|f(z)|} \ d^2 z \ll n^{m+1}
\end{equation}
thanks to the triangle inequality.  From this, we have
$$ \int_K 1_{\log_+ \frac{1}{|f(z)|} \geq n^{m+1}} \log_+^{m} \frac{1}{|f(z)|} \ d^2 z \ll 1,$$
and so it suffices to show that
$$ \E \int_K \min( \log_+^{m} \frac{1}{|f(z)|}, n^{m+1} )\ d^2 z \ll 1.$$
By the Fubini-Tonelli theorem, it thus suffices to show that
$$ \E \min\left( \log_+ \frac{1}{|f(z)|}, n^{m+1} \right)^m = O(1)$$
uniformly for all $z$ in $K$.

It will then suffice to show the following lower tail estimates on $f$:

\begin{lemma}[Lower tail estimates]\label{ltail}  Let $z$ be in a compact subset $K$ of $\{ z \in \C: |z| > 1+\eps \}$; we allow implied constants to depend on $K$.
\begin{enumerate}
\item[(i)] For every $A > 0$ there exists $B > 0$ (not depending on $n$) such that
$$ \P( |f(z)| \leq n^{-B} ) \ll n^{-A}.$$
\item[(ii)] For every $\delta > 0$ one has
$$ \P( |f(z)| \leq \delta ) \ll \delta + n^{-c}$$
for some absolute constant $c>0$.
\end{enumerate}
\end{lemma}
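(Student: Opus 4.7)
The plan is to condition on $X_n$ and observe that, for each realisation of $X_n$ with $z$ outside the spectrum of $\tfrac{1}{\sqrt n}X_n$, the function $f(z)$ is an affine function of the independent bounded entries of $\psi_n$:
$$
 f(z) = 1 + \sum_i u_{n,i}\,\psi_{n,i}, \qquad u_n := \mu\sqrt n\,\bigl(\tfrac{1}{\sqrt n}X_n - zI\bigr)^{-1}\phi_n.
$$
The problem thus reduces to a quantitative small-ball estimate for a linear functional of $\psi_n$ whose random coefficient vector $u_n$ has, on an overwhelming-probability event for $X_n$, controllable structure. Throughout I would work on the event $\{\|\tfrac{1}{\sqrt n}X_n\|_{op}\leq 1+\eps/2\}$, which has overwhelming probability by Theorem~\ref{noout} and hence contributes at most $O(n^{-A})$ to any tail bound.

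The key preparation is to show that $u_n$ is \emph{delocalised}: with overwhelming probability one has $\|u_n\|^2\asymp n$ while $\sup_i |u_{n,i}|=O(1)$. The upper and lower bounds on $\|u_n\|$ both follow from $\|(\tfrac{1}{\sqrt n}X_n - z)^{-1}\|_{op}=O_K(1)$ combined with the identity $\phi_n=(\tfrac{1}{\sqrt n}X_n - z)(\tfrac{1}{\sqrt n}X_n - z)^{-1}\phi_n$. For the entrywise bound I would expand $v_n:=(\tfrac{1}{\sqrt n}X_n - z)^{-1}\phi_n$ as a Neumann series truncated at $M=\lceil \log n\rceil$: the leading term $-\phi_n/z$ has entries of magnitude $n^{-1/2}$; each higher summand $(\tfrac{1}{\sqrt n}X_n)^m\phi_n$ has coordinates equal to normalised sums of many independent sub-Gaussian entries of $X_n$ and hence of size $O(n^{-1/2})$ with overwhelming probability (by Bernstein or Hanson--Wright-type concentration inequalities); and the Neumann tail beyond $M$ is exponentially small thanks to the spectral-radius control. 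Consequently $\sup_i |u_{n,i}|/\|u_n\|=O(n^{-1/2})$, and from $\|u_n\|^2=\|\Re u_n\|^2+\|\Im u_n\|^2$ at least one of the two has squared norm of order $n$.

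Part (ii) then follows from the classical Berry--Esseen theorem: whichever of $\Re f(z)-1,\Im f(z)$ has squared coefficient norm $\gtrsim n$, the corresponding real-valued sum of independent bounded variables has total variance $\asymp 1$ and normalised third moment $O(n^{-1/2})$, hence its law is at Kolmogorov distance $O(n^{-1/2})$ from a Gaussian with bounded density; combined with that density bound near $-1$ this gives $\P(|f(z)|\leq \delta \mid X_n)\ll \delta + n^{-1/2}$ on the good event, and unconditioning yields (ii) with $c=1/2$. Part (i) demands decay faster than any fixed polynomial, which the raw Berry--Esseen rate $n^{-1/2}$ cannot supply. The plan is to invoke a higher-order Edgeworth expansion of arbitrary fixed order $K$, legitimate because the atom distributions of both $X_n$ and $\psi_n$ have all moments finite. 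This produces Kolmogorov-distance error $O(n^{-K/2})$ to a Gaussian of bounded density, and hence $\P(|f(z)|\leq n^{-B}\mid X_n)\ll n^{-B}+n^{-K/2}$; choosing $B=A$ and $K=2A$ then gives (i). The principal technical obstacle is precisely this last step -- namely verifying the hypotheses of such a high-order expansion (chiefly a quantitative Cram\'er non-lattice condition) for the weighted sum $\sum(\Re u_{n,i})\psi_{n,i}$ uniformly on a subset of $X_n$ of overwhelming probability; delocalisation together with the genericity of the $u_{n,i}$ inherited from the random entries of $X_n$ should supply the required input, but this has to be carried out with care since the coefficient vector is itself random and, to leading order in the Neumann expansion, nearly constant across $i$.
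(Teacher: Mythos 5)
Your part~(ii) is in the same spirit as the paper's: condition on $X_n$, view $f(z)-1$ as a linear form in the iid Bernoulli entries of $\psi_n$, show the coefficient vector $v:=(\tfrac{1}{\sqrt n}X_n-z)^{-1}\phi_n$ is delocalised, and invoke Berry--Ess\'een. You obtain delocalisation via entrywise control of the Neumann series, whereas the paper uses a weaker (lower-bound) notion of delocalisation --- that $|v_i|\ge\delta/\sqrt n$ for at least $\delta n$ indices --- proved by an epsilon-net argument coupled with Talagrand/L\'evy concentration for $w\mapsto\|(\tfrac{1}{\sqrt n}X_n-z)w-\phi_n\|$. Your entrywise bound $\sup_i|v_i|=O(n^{-1/2})$ is plausible, but the claim that the coordinates of $(\tfrac{1}{\sqrt n}X_n)^m\phi_n$ are ``normalised sums of many independent sub-Gaussian entries'' is not accurate for $m\ge 2$: these are degree-$m$ multilinear forms in dependent products $x_{i,j_1}x_{j_1,j_2}\cdots$, and a direct Bernstein/Hanson--Wright application does not apply as stated; more work (e.g.\ iterated conditioning or hypercontractivity for multilinear chaoses) would be needed. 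The paper's route avoids entrywise control entirely, which is what makes it robust.

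Your part~(i) has a genuine gap and, as written, would fail. You condition on $X_n$ and propose an Edgeworth expansion of arbitrary order to push the small-ball bound below any polynomial. But $\psi_n$ has a two-point (Bernoulli) atom distribution, so the relevant sum $\sum_i(\Re u_{n,i})\psi_{n,i}$ is, conditionally on $X_n$, a linear form in \emph{lattice} variables. The Cram\'er condition required for a valid higher-order Edgeworth expansion is grossly violated, and indeed no amount of Edgeworth correction can beat the generic $O(n^{-1/2})$ barrier: to leading order $u_n\approx -\mu\phi_n/z$ is an essentially constant coefficient vector, and $\tfrac{1}{\sqrt n}\sum_i\psi_{n,i}$ genuinely has point masses of order $n^{-1/2}$. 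Improving on $n^{-1/2}$ would require the coefficient vector $u_n$ to be arithmetically unstructured in the inverse Littlewood--Offord sense, a substantially deeper input than ``a quantitative Cram\'er non-lattice condition.'' The paper avoids this entirely: from the identity $(A_n-zI)(\tfrac{1}{\sqrt n}X_n-zI)^{-1}\phi_n = f(z)\phi_n$ one gets $|f(z)|\ge\sigma_n(A_n-zI)/\|\tfrac{1}{\sqrt n}X_n-zI\|_{op}$, and then cites least-singular-value bounds for iid-plus-deterministic matrices (e.g.\ \cite[Lemma 4.1]{tv-circular2}); this uses the randomness of $X_n$, not of $\psi_n$, and so sidesteps the lattice obstruction altogether. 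You should replace your part~(i) argument with this (or an equivalent) least-singular-value reduction.
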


Indeed, item (i) (with $A = 2m$) allows one to reduce to the case when $\log_+ \frac{1}{|f(z)|} = O(\log n)$ (it is here that we take advantage of our previous regularisation of the logarithm), and then (ii) and a dyadic decomposition gives the claim.

\begin{proof}
We begin with (i).  From \eqref{ann0} and \eqref{fdef} we have the identity
$$
(A_n - z I) (\frac{1}{\sqrt{n}} X_n - z I)^{-1} \phi_n
= f(z) \phi_n$$
and hence
$$ |f(z)| \geq \frac{\sigma_n(A_n - zI)}{\| \frac{1}{\sqrt{n}} X_n - z I \|_{op}},$$
where $\sigma_n(A_n-zI)$ is the least singular value of $A_n-zI$.  By Theorem \ref{noout}, we have $\| \frac{1}{\sqrt{n}} X_n - z I \|_{op}=O(1)$ with overwhelming probability.  The claim now follows from the least singular value bounds in \cite[Lemma 4.1]{tv-circular2}, \cite[Theorem 2.1]{TV-circular}, or \cite[Theorem 4.1]{lsv}, since we may express $A_n-zI$ as the sum of the normalised iid random matrix $\frac{1}{\sqrt{n}} X_n$ and the deterministic matrix $\mu \sqrt{n} \phi_n \psi_n^* - zI$, which has polynomial size.

Now we prove (ii).  Let $v$ be the vector $v := (\frac{1}{\sqrt{n}} X_n - z I)^{-1} \phi_n$.  It will suffice to show that
$$ \P(\sqrt{n} \langle v, \psi_n \rangle \in I) \ll |I| + n^{-c}$$
for every interval $I$.  From Theorem \ref{noout} and Neumann series, we see that with overwhelming probability, 
\begin{equation}\label{over}
 \| (\frac{1}{\sqrt{n}} X_n - z I)^{-1}\|_{op}, \| (\frac{1}{\sqrt{n}} X_n - z I) \|_{op} = O(1)
\end{equation}
and so 
$$ 1 \ll \|v\| \ll 1.$$
Let $\delta > 0$ be a small quantity (independent of $n$) to be chosen later.  
Call $v$ \emph{delocalised} if we have $|v_i| \geq \delta/\sqrt{n}$ for at least $\delta n$ of the indices $i=1,\ldots,n$.  As the coefficients of $\psi_n$ are iid (and are independent of $v$), we see from the Berry-Ess\'een theorem (see e.g. \cite[Chapter XVI]{feller}) that the contribution of the delocalised $v$ are acceptable.  It thus suffices to show that $v$ is delocalised with probability $1-O(n^{-c})$ for some $c>0$.

We will use an epsilon-net argument (cf. \cite{litvak}, \cite{rudelson}).  If $v$ is not delocalised, then $v$ lies within $O(\delta)$ in $\ell^2$ norm of a vector $w$ of comparable to $1$ that is \emph{sparse} in the sense that it is supported on at most $\delta n$ indices, simply by restricting $v$ to those indices $i$ for which $|v_i| \geq \delta/\sqrt{n}$, and truncating all other coefficients to zero.  From \eqref{over} and the triangle inequality, we thus have
$$ \| (\frac{1}{\sqrt{n}} X_n - z I) w - \phi_n \| \ll \delta.$$
The number of possible supports for $w$ is at most $\exp( O( \delta \log \frac{1}{\delta} \times n ) )$, thanks to Stirling's formula.  Once one fixes the supports, one can cover the range of $w$ by $O( (1/\delta) )^{\delta n}$ balls in $\ell^2$ norm of radius $\delta$.  Thus, by moving $w$ by at most $O(\delta)$ if necessary, we may assume that $w$ lies in a net $\Sigma$ of cardinality
\begin{equation}\label{sigma-card}
 |\Sigma| \ll \exp( O( \delta \log \frac{1}{\delta} \times n ) ).
\end{equation}

For each fixed $w$, the expression $\sqrt{n} \| (\frac{1}{\sqrt{n}} X_n - z I) w - \phi_n \|$ is a convex, $1$-Lipschitz function of $X_n$ as measured using the Frobenius norm $\|A\|_F := (\tr(A^* A))^{1/2}$.  Applying\footnote{For an extensive discussion of concentration inequalities, see \cite{ledoux}.  Note that many other atom distributions also enjoy concentration inequalities, such as those distributions with the log-Sobolev property; we will not attempt to aim for maximal generality here.} either the Talagrand concentration inequality (if $x$ is bounded) or the L\'evy concentration inequality (if $x$ is Gaussian), we conclude that
$$ \P\left( \left|\sqrt{n} \| (\frac{1}{\sqrt{n}} X_n - z I) w - \phi_n \| - M\right| \geq \lambda \right) \ll e^{-c\lambda^2}$$
for some absolute constant $c>0$, where $M$ is the median value of $\sqrt{n} \| (\frac{1}{\sqrt{n}} X_n - z I) w - \phi_n \|$.  To compute this median, we first compute the second moment
$$ \E \left(\sqrt{n} \| (\frac{1}{\sqrt{n}} X_n - z I) w - \phi_n \|\right)^2.$$
Expanding this out and using the fact that $X_n$ is an iid random matrix, this simplifies to
$$ n ( \| z w + \phi_n\|^2 + \|w\|^2 ).$$
In particular, this expression is comparable to $n$.  From the concentration inequality, we conclude that
$$ \P\left( \| (\frac{1}{\sqrt{n}} X_n - z I) w - \phi_n \| \ll \delta \right) \ll \exp(-cn)$$
if $\delta$ is small enough.  Summing up over all $w \in \Sigma$ using \eqref{sigma-card} and the union bound, we obtain the claim.
\end{proof}

This concludes the proof of part (i) of Theorem \ref{ull}.

\subsection{Correlation functions}

Now we prove part (ii) of Theorem \ref{ull}.  From part (i), we have the bound
\begin{equation}\label{oma}
 \int_{\Omega^k} \rho^{(k)}_n(z_1,\ldots,z_k) d^2 z_1 \ldots d^2 z_k = O(1)
\end{equation}
for each fixed $k$.  A similar (but simpler) argument also shows that
\begin{equation}\label{omb}
\int_{\Omega^k} \rho^{(k)}_\infty(z_1,\ldots,z_k) d^2 z_1 \ldots d^2 z_k = O(1)
\end{equation}
(the point being that the Gaussian random Laurent series $g(z)$ has a Gaussian distribution at each $z$ with an explicitly computable variance, so one can easily control the moments of $(\log^+ \frac{1}{|g(z)|})^m$).  As a consequence of these bounds, we can control perturbations to the test functions $F$ that are small in the uniform norm.

From Theorem \ref{noout} we know that the spectral radius of $\frac{1}{\sqrt{n}} M_n$ is at most $1+\eps$ with overwhelming probability.  The tail event when the spectral radius exceeds $1+\eps$ is thus negligible for the purposes of computing the asymptotics of the correlation functions and will thus be ignored.

Conditioning on the event that the spectral radius is at most $1+\eps$, we may then apply Lemma \ref{eigencrit} to conclude that the eigenvalues of $A_n$ in $\Omega$ are precisely the zeroes in $\Omega$ of the random analytic function
$$ f(z) := 1 + \mu \sqrt{n} \left\langle (\frac{1}{\sqrt{n}} X_n - z I)^{-1} \phi_n, \psi_n \right\rangle.$$
Thus, up to errors of $o(1)$, the correlation function $\rho^{(k)}_{A_n}$ is equal on $\Omega$ to the correlation function of the zeroes of $f$ (defined as in \eqref{ck}).  

As in previous sections, once the spectral radius is at most $1+\eps$, we can expand $f$ as a convergent Neumann series
$$ f(z) = 1 - \mu \sum_{j=0}^\infty \frac{f_{n,j}}{z^{j+1}}$$
where
$$ f_{n,j} := \sqrt{n} \left\langle (\frac{1}{\sqrt{n}} X_n)^j \phi_n, \psi_n \right\rangle.$$
To control this expression properly, we will need to work instead with the truncated Neumann series
$$ f(z) = 1 - \mu \sum_{j=0}^J \frac{f_{n,j}}{z^{j+1}} + \frac{\mu}{z^{J+1}} R_J(z)$$
for any $J \geq 1$, where the remainder $R_J$ is given by the formula
$$ R_J(z) := \sqrt{n} \langle (\frac{1}{\sqrt{n}} X_n)^{J+1} (\frac{1}{\sqrt{n}} X_n - z I)^{-1} \phi_n, \psi_n \rangle.$$

We now obtain a concentration bound on $R_J(z)$:

\begin{lemma}\label{rconj} Let $A,J \geq 1$.  If $n$ is sufficiently large (depending on $A,J,\eps,k$), then for each $z$ with $|z| \geq 1+2\eps$ and all $\lambda > 0$, one has
$$ \P( |R_J(z)| \geq \lambda J ) \ll e^{-c \lambda^2} + n^{-A}$$
for some $c>0$ depending only on $\eps,p,k$.
\end{lemma}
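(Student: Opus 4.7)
The plan is to realize $R_J(z)$ as a linear functional of the independent random vector $\psi_n$ once we condition on $X_n$, and then apply a Hoeffding-type concentration inequality. Write $R_J(z) = \langle v, \psi_n\rangle$ where
$$v := \sqrt{n}\,(\tfrac{1}{\sqrt{n}}X_n)^{J+1}(\tfrac{1}{\sqrt{n}}X_n - zI)^{-1}\phi_n.$$
Since $\psi_n$ is independent of $X_n$, freezing $X_n$ makes $v$ deterministic, and the randomness reduces to that of the iid entries of $\psi_n$.

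The first step is to define a good event $E$ on which both $\|(\tfrac{1}{\sqrt{n}}X_n)^{J+1}\|_{op} \leq J+3$ and $\|(\tfrac{1}{\sqrt{n}}X_n)^{m}\|_{op} \leq K(1+\eps)^m$ hold for all $m\geq 1$ (with $K$ depending only on $\eps$). The first bound follows directly from the overwhelming-probability form of Theorem \ref{noout} applied with $m=J+1$; the second follows by applying Theorem \ref{noout} at a fixed $m_0$ depending only on $\eps$ and then extending to all $m$ via submultiplicativity, exactly as in Section \ref{slrp}. Both cases of Theorem \ref{ull} (Gaussian or bounded atom) supply all moments, so $\P(E^c) \ll n^{-A}$ for $n$ large enough. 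On $E$, the Neumann series gives $\|(\tfrac{1}{\sqrt{n}}X_n - zI)^{-1}\|_{op} \leq K\sum_{m\geq 0}(1+\eps)^m/(1+2\eps)^{m+1} = O_\eps(1)$ whenever $|z|\geq 1+2\eps$, and so $\|v\| \leq \sqrt{n}(J+3)\cdot O_\eps(1) = O_\eps(\sqrt{n}\,J)$.

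The second step is concentration. The entries $\psi_{n,i}$ are independent, mean zero, and satisfy $|\psi_{n,i}|\leq C_p/\sqrt{n}$. Conditionally on $X_n\in E$, $v$ is a fixed vector with $\|v\|=O_\eps(\sqrt{n}\,J)$, and Hoeffding's inequality applied to the real and imaginary parts of $\sum_i v_i\psi_{n,i}$ yields
$$\P\!\left(|R_J(z)|\geq \lambda J \,\big|\, X_n\right) \ll \exp\!\left(-\tfrac{c\,\lambda^2 J^2\, n}{\|v\|^2}\right) \ll \exp(-c'\lambda^2),$$
with $c'$ depending only on $\eps$ and $p$. Combining with $\P(E^c)\ll n^{-A}$ produces the stated bound $\P(|R_J(z)|\geq \lambda J) \ll e^{-c'\lambda^2}+n^{-A}$.

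The main obstacle is not the concentration step — which is essentially a textbook Hoeffding bound, of the same flavour as the Talagrand/L\'evy inequalities already used for the least singular value in Lemma \ref{ltail} — but rather verifying the correct polynomial-in-$J$ norm bound $\|v\|=O(\sqrt{n}J)$. This requires the sharp operator-norm asymptotic $\|(\tfrac{1}{\sqrt{n}}X_n)^{J+1}\|_{op}\to J+2$ from Theorem \ref{noout}, which is what produces the linear factor of $J$ on the right-hand side of the inequality, and which in turn depends on the finite-moment hypotheses guaranteed by the setup of Theorem \ref{ull}.
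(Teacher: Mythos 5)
Your proof is correct and follows essentially the same route as the paper: condition on $X_n$ lying in a good event (controlled by Theorem \ref{noout} with overwhelming probability), obtain the norm bound $\|(\frac{1}{\sqrt n}X_n)^{J+1}(\frac{1}{\sqrt n}X_n - zI)^{-1}\phi_n\| \ll J$ via the tail estimate and a Neumann-series bound on the resolvent, and then exploit the independence of $\psi_n$ from $X_n$ to apply a Hoeffding-type concentration inequality (the paper cites Azuma--Hoeffding, which reduces to Hoeffding here since the $\psi_{n,i}$ are iid bounded). The only differences are cosmetic choices of constants in the good event.
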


\begin{proof}  Let $m_0$ be an integer such that $(m_0+1) < (1+\eps)^{m_0}$.  By Theorem \ref{noout}, we see that with overwhelming probability, we have
$$ \| (\frac{1}{\sqrt{n}} X_n) \|_{op} \ll 1$$
and
$$ \| (\frac{1}{\sqrt{n}} X_n)^{m_0} \|_{op} \leq (1+\eps)^{m_0}$$
and hence by Neumann series
$$ \| (\frac{1}{\sqrt{n}} X_n - z I)^{-1} \|_{op} \ll 1$$
(recall that we allow implied constants to depend on $\eps$).  Henceforth we condition on the above event.  By another application of Theorem \ref{noout}, we see that with overwhelming probability, we have
$$  \| (\frac{1}{\sqrt{n}} X_n)^{J+1} \|_{op} \ll J$$
and hence
$$ \| (\frac{1}{\sqrt{n}} X_n)^{J+1} (\frac{1}{\sqrt{n}} X_n - z I)^{-1} \phi_n \| \ll J.$$
Note that the random vector $\psi_n$ is independent of $X_n$.  The claim then follows from the Azuma-Hoeffding inequality.
\end{proof}

Meanwhile, we have the following law for the $f_{n,j}$:

\begin{lemma}  Let $J \geq 1$. As $n \to \infty$, the random variables $f_{n,0},\ldots,f_{n,J}$ converge jointly in distribution to $J+1$ iid copies $g_0,\ldots,g_J$ of the real normal distribution $N(0,1)_\R$.
\end{lemma}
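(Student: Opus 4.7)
The plan is to reduce everything to Proposition \ref{clt} by conditioning on $\psi_n$, since $\psi_n$ is independent of $X_n$, and to handle the $j=0$ coordinate separately by the classical central limit theorem.

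First I would record the basic properties of $\psi_n$: its entries are deterministically bounded in absolute value by $C/\sqrt{n}$ for some constant $C=C(p)$, and $\|\psi_n\|^2 = \sum_i \psi_{n,i}^2$ is a sum of iid bounded random variables with mean $1/n$, so $\|\psi_n\| \to 1$ almost surely by the strong law. In particular, the normalized vector $\widetilde{\psi}_n := \psi_n/\|\psi_n\|$ is almost surely a unit vector satisfying the delocalization hypothesis $\sup_i |\widetilde{\psi}_{n,i}| = O(1/\sqrt{n})$ of Proposition \ref{clt} for all sufficiently large $n$.

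Next I would apply Proposition \ref{clt} conditionally on $\psi_n$, with $u_n := \phi_n$ and $v_n := \widetilde{\psi}_n$. Since $\phi_n$ trivially satisfies the delocalization hypothesis, the proposition (applied with the deterministic realisation of $\widetilde{\psi}_n$) gives that conditionally on $\psi_n$, the random variables $\sqrt{n}\langle (\tfrac{1}{\sqrt n}X_n)^j \phi_n, \widetilde{\psi}_n\rangle$ for $j=1,\ldots,J$ converge jointly to $J$ iid copies of $N(0,1)_\R$, in the sense of moments. Multiplying by $\|\psi_n\|$ (which tends to $1$) yields the same conclusion for $f_{n,1},\ldots,f_{n,J}$. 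The $j=0$ coordinate $f_{n,0} = \sqrt{n}\langle \phi_n,\psi_n\rangle = \sum_{i=1}^n \psi_{n,i}$ is a sum of iid bounded real random variables with mean zero and total variance one, so by the classical Lindeberg CLT it converges to $N(0,1)_\R$.

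To obtain joint convergence and the asymptotic independence of $f_{n,0}$ from the other coordinates, I would examine the characteristic function
\begin{equation*}
\E \exp\!\Bigl(i\sum_{j=0}^J t_j f_{n,j}\Bigr) \;=\; \E\Bigl[\,e^{it_0 f_{n,0}}\,\E\bigl[\,e^{i\sum_{j=1}^J t_j f_{n,j}}\;\big|\;\psi_n\,\bigr]\Bigr],
\end{equation*}
using that $f_{n,0}$ is $\psi_n$-measurable. By the previous step the inner conditional expectation tends, on a set of $\psi_n$ of probability $1-o(1)$, to the deterministic Gaussian value $\prod_{j=1}^J e^{-t_j^2/2}$; on the exceptional set we use the trivial bound by $1$. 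This factor therefore pulls out of the outer expectation in the limit, leaving $\E e^{it_0 f_{n,0}} \to e^{-t_0^2/2}$ by the classical CLT.

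The one subtle point is verifying that the convergence in Proposition \ref{clt} is uniform over the typical realisations of $\widetilde{\psi}_n$, so that we may integrate out $\psi_n$. This will be the main technical obstacle, but it is not a serious one: the proof of Proposition \ref{clt} proceeds by the method of moments, and inspection of the combinatorial estimates shows that all bounds depend only on $\|v_n\|$ and on the delocalization constant $\sqrt{n}\sup_i|v_{n,i}|$, both of which are controlled for $\widetilde{\psi}_n$ uniformly on a set of probability $1-o(1)$. Thus the moment computation — hence the implied convergence of the conditional characteristic function — holds uniformly on that set, which suffices for the dominated convergence argument above.
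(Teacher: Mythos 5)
Your proof is correct and follows essentially the same route as the paper: condition on $\psi_n$, use the classical CLT for $f_{n,0}$, apply Proposition~\ref{clt} with $u_n=\phi_n$ and the normalized $\psi_n$ (whose entries are deterministically $O(1/\sqrt n)$ and whose norm is $1+o(1)$ with probability $1-o(1)$), and integrate out the conditioning. You are somewhat more explicit than the paper about the characteristic-function factorization and the uniformity-over-typical-$\psi_n$ issue needed to justify the integration, but this is a fleshing-out of the same argument rather than a different one.
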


\begin{proof}  By the central limit theorem, $f_{n,0} = \sqrt{n} \langle \phi_n, \psi_n \rangle$ converges in distribution to $N(0,1)_\R$.  Now freeze $\psi_n$ and thus $f_{n,0}$.  By the law of large numbers, $\psi_n$ has a norm of $1+o(1)$ with probability $1-o(1)$.  Conditioning on this event, we see from Proposition \ref{clt} that $f_{n,1},\ldots,f_{n,J}$ converge in distribution to $g_1,\ldots,g_J$.  Integrating out the conditioning on $\psi_n$, we obtain the claim.
\end{proof}

In view of this proposition and the Skorokhod representation theorem, we may thus find iid copies $g_0,g_1,g_2,\ldots$ of the real normal distribution $N(0,1)_\R$ (depending on $n$) that are coupled to the $f_{n,j}$ in such a way that
\begin{equation}\label{fjj}
 \sup_{0 \leq j \leq J} |f_{n,j} - g_j| = o(1)
\end{equation}
uniformly with probability $1-o(1)$, for each $J \geq 1$.  We thus have
$$ f(z) = 1 - \mu \sum_{j=0}^J \frac{g_j}{z^{j+1}} + \frac{\mu}{z^{J+1}} R_J(z) + o(1)$$
uniformly with probability $1-o(1)$, for any fixed $J$.  

Next, we introduce the function
$$ g(z) = 1 - \mu \sum_{j=0}^\infty \frac{g_j}{z^{j+1}}.$$
The correlation functions $\rho^{(k)}_\infty$ are the correlation functions of the zeroes of $g$.  It thus suffices to show that the correlation functions of the zeroes of $f$ converge in the vague topology to the correlation functions of the zeroes of $g$.  

For any fixed $z$ in $\Omega$, the tail $\mu \sum_{j=J+1}^\infty \frac{g_j}{z^{j-J}}$ of $g(z)$ is Gaussian with mean zero and variance $O( |z|^{-2J-2} )$.  It thus obeys the same tail bound as Lemma \ref{rconj} (indeed it obeys slightly better bounds).  From this, Lemma \ref{rconj}, and \eqref{fjj}
we conclude that
$$ \P( |f(z)-g(z)| \geq \lambda J / |z|^{J+1} ) \ll e^{-c'\lambda^2} + o(1)$$
for any $\lambda \geq 1$ and $J \geq 1$, where $c'>0$ is an absolute constant and the decay rate $o(1)$ can depend on $z$.  Letting $J \to \infty$, we conclude that $f(z)-g(z)$ converges in probability to zero for any fixed $z \in \Omega$.

Given any smooth compactly supported function $F: \Omega \to \C$, define the random variables
$$ F(\Lambda_f) := \sum_{w \in \Lambda_f} F(w)$$
and
$$ F(\Lambda_g) := \sum_{w \in \Lambda_g} F(w),$$
where $\Lambda_f,\Lambda_g$ are the zeroes of $f,g$ respectively (counting multiplicity).  By the Stone-Weierstrass theorem (using \eqref{oma}, \eqref{omb} to control errors that are small in the uniform norm), it suffices to show that
$$ \E F_1(\Lambda_f) \ldots F_k(\Lambda_f) = \E F_1(\Lambda_g) \ldots F_k(\Lambda_g) + o(1)$$
for all smooth compactly supported $F_1,\ldots,F_k: \Omega \to \C$.  From part (i) of Theorem \ref{ull}, we know that $F_1(\Lambda_f) \ldots F_k(\Lambda_f)$ is uniformly integrable in $n$ (indeed, it has bounded $L^m$ norm for each $m$).  Thus it suffices to show that $F_1(\Lambda_f) \ldots F_k(\Lambda_f) - F_1(\Lambda_g) \ldots F_k(\Lambda_g)$ converges in probability to zero.  By another appeal to Theorem \ref{ull}(i), it suffices
to show that $F_j(\Lambda_f)-F_j(\Lambda_g)$ converges in probability to zero for each $j$.

Fix $j$, and write $F$ for $F_j$.  By Green's theorem, we can write
$$ F(\Lambda_f) = \frac{1}{2\pi} \int_\C (\Delta F(z)) \log |f(z)|\ d^2 z$$
where $\Delta := \frac{\partial^2}{\partial x^2} + \frac{\partial^2}{\partial y^2}$ is the usual Laplacian.  Similarly for $F(\Lambda_g)$.  Thus it suffices to show that
$$ \int_\C (\Delta F(z)) (\log |f(z)| - \log |g(z)|)\ d^2 z$$
converges in probability to zero.  

We already know that for fixed $z$, $f(z)-g(z)$ converges in probability to zero.  The function $\Delta F$ is bounded and compactly supported.  Also, by Lemma
To conclude the claim it then suffices by a truncation argument (cf. \cite[Lemma 3.1]{tv-circular2}) to obtain the uniform integrability bounds
$$ \E \int_{\operatorname{supp}(F)} |\log |f(z)||^2 + |\log |g(z)||^2\ d^2 z = O(1).$$
But the bound for $f$ follows from \eqref{loga}; the bound for $g$ can be deduced from $f$ by a Fatou lemma type argument. The proof of Theorem \ref{ull} is now complete.

\section{Zero row sum}\label{zrc}

We now prove Theorem \ref{xp}.  We begin by proving the spectral radius upper bound
$$ \rho( \frac{1}{\sqrt{n}} X_n P_n ) \leq 1 + o(1)$$
which holds almost surely.  It will suffice to show that almost surely one has
$$ \| ( \frac{1}{\sqrt{n}} X_n P_n )^m \|_{op} \leq O(m^{O(1)}) + o(1)$$
for each $m \geq 1$. Writing $P_n = 1 - (1-P_n)$, expanding, and applying Theorem \ref{noout} and the fact that the operator norm forms a Banach algebra, it then suffices to show that
$$ \| (1-P_n) (\frac{1}{\sqrt{n}} X_n)^j (1-P_n) \|_{op} = o(1)$$
 almost surely for each fixed $j$, as this handles all but the $O(m)$ terms in the expansion that involve at most one factor of $1-P_n$, each of which is $O(m^2)$ at worst by Theorem \ref{noout}.  But this bound follows from Lemma \ref{coef}.
 
The spectral radius lower bound will follow from the circular law claim.  Since almost sure convergence implies convergence in probability by the dominated convergence theorem, it will suffice to show that $\mu_{\frac{1}{\sqrt{n}} X_n P_n}$ and $\mu_{\frac{1}{\sqrt{n}} X_n}$ have the same almost sure limit.  Applying the replacement principle (\cite[Theorem 2.1]{tv-circular2}), it suffices to show that for almost every complex number $z$, one has
\begin{equation}\label{npz}
 \frac{1}{n} \log |\det( \frac{1}{\sqrt{n}} X_n P_n - z )| - \frac{1}{n} \log |\det( \frac{1}{\sqrt{n}} X_n - z )|
\end{equation}
converges almost surely to zero.

Fix $z$; we may take $z$ to be non-zero.   We allow implied constants in the $O()$ notation to depend on $z$.  We can rewrite \eqref{npz} as
\begin{equation}\label{nupp}
\frac{1}{2} \left( \int_0^\infty \log t\ d\nu'_n(t) - \int_0^\infty \log t\ d\nu_n(t)\right)
\end{equation}
where $\nu'_n, \nu_n$ are the ESDs of $( \frac{1}{\sqrt{n}} X_n P_n - z) ( \frac{1}{\sqrt{n}} X_n P_n - z)^*$ and $( \frac{1}{\sqrt{n}} X_n - z) ( \frac{1}{\sqrt{n}} X_n - z)^*$ respectively. 

The matrix $\frac{1}{\sqrt{n}} X_n P_n - z$ is a rank one perturbation of $\frac{1}{\sqrt{n}} X_n - z$, and so the singular values $\sigma_{i}( \frac{1}{\sqrt{n}} X_n P_n - z )$ of the former interlace that of the latter in the sense that
\begin{equation}\label{interlace}
 \sigma_{i-1}( \frac{1}{\sqrt{n}} X_n - z ) \geq \sigma_{i}( \frac{1}{\sqrt{n}} X_n P_n - z ) \geq \sigma_{i+1}( \frac{1}{\sqrt{n}} X_n P_n - z )
\end{equation}
whenever $i$ is such that the expressions are well-defined (i.e. $1 < i \leq n$ for the first inequality and $1 \leq i < n$ for the second).  This adequately controls all of the singular values of $\frac{1}{\sqrt{n}} X_n P_n - z$ except for the smallest and largest.  But from Theorem \ref{noout} we know that the largest singular value of both matrices are $O(1)$.  From \cite[Lemma 4.1]{tv-circular2} we almost surely also have a lower bound 
$$ 
\sigma_n( \frac{1}{\sqrt{n}} X_n - z ) \gg n^{-O(1)}$$
for all sufficiently large $n$.  So if we can also obtain the corresponding bound
\begin{equation}\label{xpz}
\sigma_n( \frac{1}{\sqrt{n}} X_n P_n - z ) \gg n^{-O(1)}
\end{equation}
almost surely for all sufficiently large $n$, then by the alternating series test\footnote{More precisely, the integrals $\int_0^\infty \log t\ d\nu_n(t)$ and $\int_0^\infty \log t\ d\nu'_n(t)$ are both averages of $n$ increasing quantities of size $O( 1 + |\log n^{-O(1)}|)$; by the interlacing property \eqref{interlace} (which bounds the even terms in the latter average by the odd terms in the former, and vice versa), the difference between these two averages can be rearranged as an average of two alternating series whose terms are increasing in magnitude.} we see that
$$
\eqref{nupp} = O\left( \frac{1 + | \log n^{-O(1)} |}{n} \right) = o(1)$$
as required.  So it will suffice to establish the least singular value bound \eqref{xpz}.  By the Borel-Cantelli lemma, it will suffice to show that
$$
\P( \sigma_n( \frac{1}{\sqrt{n}} X_n P_n - z ) \leq n^{-C} ) = O( n^{-2} ) $$
(say) for all sufficiently large $n$, and some absolute constant $C$.  Taking transposes, it suffices to show that
$$
\P( \sigma_n( \frac{1}{\sqrt{n}} P_n X_n - z ) \leq n^{-C} ) = O( n^{-2} ).$$

Let $C$ be chosen later.  In order for the above event to hold, there must exist a unit vector $v$ such that
$$ \| \frac{1}{\sqrt{n}} P_n X_n v - z v \| \leq n^{-C}.$$
We now work to eliminate the role of the projection $P_n$ by dropping a dimension.  Taking inner products with $\phi_n$, we see that
$$ z |\langle v, \phi_n \rangle| \leq n^{-C}.$$
Since $z$ is fixed and non-zero, we thus see that
$$ v - P_n v = O( n^{-C+O(1)} )$$
and thus
$$ \frac{1}{\sqrt{n}} P_n X_n P_n v - z P_n v = O(n^{-C+O(1)}).$$
If we let $v' := v-P_n v / \|v-P_n v\|$, we thus see that $v'$ is orthogonal to $\phi_n$ and
$$ \frac{1}{\sqrt{n}} P_n X_n v' - z v' = O(n^{-C+O(1)})$$
and thus
$$ \frac{1}{\sqrt{n}} X_n v' - z v' = \alpha \phi_n + O(n^{-C+O(1)})$$
for some complex number $\alpha$.  Writing
$$ v' := (v'_1,\ldots,v'_{n-1}, -v'_1-\ldots-v'_n)$$
we thus have
$$ \sum_{j=1}^{n-1} [\frac{1}{\sqrt{n}} (x_{ij} - x_{in}) - z (\delta_{ij} - \delta_{in})] v'_j = \frac{\alpha}{\sqrt{n}} + O(n^{-C+O(1)})$$
for all $i=1,\ldots,n$.  Subtracting off the $i=n$ equation to eliminate $\alpha$, we conclude that
$$ \sum_{j=1}^{n-1} [\frac{1}{\sqrt{n}} (x_{ij} - x_{in} - x_{nj} + x_{nn}) - z (\delta_{ij} + 1)] v'_j = O(n^{-C+O(1)})$$
for all $i=1,\ldots,n-1$.  Since $v'$ has unit norm, $(v'_1,\ldots,v'_{n-1})$ has norm between $1$ and $1/2\sqrt{n}$, and we conclude that
\begin{equation}\label{sxn}
 \sigma_{n-1}( \frac{1}{\sqrt{n}} X_{n-1} + D_{n-1} ) = O(n^{-C+O(1)} )
\end{equation}
where $X_{n-1}, D_{n-1}$ are the $n-1 \times n-1$ matrices
$$ X_{n-1} := (x_{ij})_{1 \leq i,j \leq n-1}$$
and
$$ D_{n-1} := ( \frac{1}{\sqrt{n}} (-x_{in}-x_{nj}+x_{nn}) - z \delta_{ij} + z )_{1 \leq i,j \leq n-1}.$$
If we condition $x_{in}, x_{jn}, x_{nn}$ to be fixed, then $D_{n-1}$ is deterministic, while $X_{n-1}$ remains an iid random matrix.  Applying \cite[Lemma 4.1]{tv-circular2}, \cite[Theorem 2.1]{TV-circular}, or \cite[Theorem 4.1]{lsv}, we see that the conditional probability of \eqref{sxn} is $O(n^{-2})$ if $C$ is large enough, and if the $x_{in}, x_{nj}, x_{nn}$ are bounded by (say) $n^{100}$ in magnitude.  Integrating out the conditioning (and using Chebyshev's inequality and the union bound to handle the rare event when one of the entries $x_{in}, x_{nj}, x_{nn}$ is larger than $n^{100}$ in magnitude) we obtain the claim.  This concludes the proof of Theorem \ref{xp}.


\begin{thebibliography}{10}

\bibitem{akv}
N. Alon, M. Krivelevich, V.H. Vu, \emph{On the concentration of eigenvalues of random symmetric matrices}, Israel J. Math. 131 (2002) 259-–267.

\bibitem{andrew}
A. Andrew, \emph{Eigenvalues and singular values of certain random matrices}, J. Comput. Appl. Math. \textbf{30} (1990), no. 2, 165–171. 

\bibitem{agg}
P. Arbenz, W. Gander, G. Golub, \emph{Restricted rank modification of the symmetric eigenvalue problem: Theoretical considerations}, Linear Algebra and its Applications \textbf{104} (1988), 75--95.

\bibitem{bai-first}
Z. D. Bai, \emph{Circular law}, {Ann. Probab.} \textbf{25} (1997), 494--529.

\bibitem{bai-exact}
Z. D. Bai, J. Silverstein, \emph{Exact separation of eigenvalues of large-dimensional sample covariance matrices},
Ann. Probab. 27 (1999), no. 3, 1536–-1555. 

\bibitem{bai}
Z. D. Bai and J. Silverstein, Spectral analysis of large dimensional random matrices, Mathematics Monograph Series \textbf{2}, Science Press, Beijing 2006.

\bibitem{baik}
J. Baik, J. Silverstein, Eigenvalues of large sample covariance matrices of spiked population models. (English summary) 
J. Multivariate Anal. \textbf{97} (2006), no. 6, 1382-–1408. 


\bibitem{baiyao}
D. Bai, J.-F. Yao, \emph{Central limit theorems for eigenvalues in a spiked population model}, Ann. I.H.P.-Prob. et Stat., 2008, Vol. 44 No. 3, 447-–474.

\bibitem{BY}  Z. D. Bai, Y. Q. Yin,  {\it Necessary and sufficient conditions for almost sure convergence of the largest eigenvalue of a Wigner matrix,}
{  Ann. Probab.  } \textbf{16}  (1988),  no. 4, 1729--1741.

\bibitem{baiyin}
Z. D. Bai, Y. Q. Yin, \emph{Limiting behavior of the norm of products of random matrices and two problems of Geman-Hwang}, Probab. Theory Relat. Fields \textbf{73} (1986), 555--569.

\bibitem{bby}
J. Baik, G. Ben Arous, S. Peche, \emph{Phase transition of the largest eigenvalue for nonnull complex sample covariance matrices}, Ann. Probab. \textbf{33} (2005), no. 5, 1643–-1697.
 
\bibitem{bs}
J. Baik, J. Silverstein, \emph{Eigenvalues of large sample covariance matrices of spiked population models}, J. Multivariate Anal. \textbf{97} (2006), no. 6, 1382–-1408.
  
\bibitem{br}
F. Benaych-Georges, R. Rao, \emph{The eigenvalues and eigenvectors of finite, low rank perturbations of large random matrices}, preprint.

\bibitem{bgm}
F. Benaych-Georges, A. Guionnet, M. Maida, \emph{Large deviations of the extreme eigenvalues of random deformations of matrices}, preprint.

\bibitem{bgm2}
F. Benaych-Georges, A. Guionnet, M. Maida, \emph{Fluctuations of the extreme eigenvalues of finite rank deformations of random matrices}, preprint.

\bibitem{bordenave}
C. Bordenave, \emph{On the spectrum of sum and product of non-hermitian random matrices}, preprint. Available at {\tt http://arxiv.org/abs/1010.3087}.

\bibitem{chafai}
C. Bordenave, P. Caputo, D. Chafa\"i, \emph{Circular law theorem for random Markov matrices}, preprint.  Available at {\tt http://arxiv.org/abs/0808.1502}.


\bibitem{cdf}
M. Capitaine, C. Donati-Martin, D. F\'eral, \emph{Central limit theorems for eigenvalues of deformations of Wigner matrices}, preprint. Available at {\tt http://arxiv.org/abs/0903.4740}.


\bibitem{cha} D. Chafai, \emph{Circular law for non-central random matrices}, J. Th. Prob. \textbf{23} (2010) 945--950.

\bibitem{feller}
W. Feller, An introduction to probability theory and its applications, volume II. John Wiley \& Sons, Inc., New York-London-Sydney, 1971.

\bibitem{feral}
D. F\'eral, S. P\'ech\'e, \emph{The largest eigenvalue of rank one deformation of large Wigner matrices}, Commun. Math. Phys. 272 (2007), 185--228.

\bibitem{geman}
S. Geman, \emph{The spectral radius of large random matrices}, Ann. Probab. \textbf{14} (1986), 1318--1328.

\bibitem{gin}
J. Ginibre, \emph{Statistical Ensembles of Complex, Quaternion, and Real Matrices},
{Journal of Mathematical Physics} \textbf{6} (1965), 440-–449.

\bibitem{girko}
V. L. Girko, \emph{Circular law}, {Theory Probab. Appl.} (1984), 694--706.

\bibitem{twenty}
V. L. Girko, \emph{The strong circular law. Twenty years later. II}.  {Random Oper. Stochastic Equations} \textbf{12}  (2004),  no. 3, 255--312.

\bibitem{gotze}
F. G\"otze, A.N. Tikhomirov, \emph{On the circular law}.  Available at {\tt arxiv.org/abs/math/0702386}.

\bibitem{gt2}
F. G\"otze, A.N. Tikhomirov, \emph{The Circular Law for Random Matrices}, Ann. Probab. \textbf{38} (2010), no. 4, 1444–-1491.
 
\bibitem{gz}
A. Guionnet, O. Zeitouni, \emph{Concentration of the spectral measure for large matrices}, Electron. Comm. Probab. \textbf{5} (2000) 119-–136.

\bibitem{hough}
B. J. Hough, M. Krishnapur, Y. Peres, B. Vir\'ag, Zeros of Gaussian analytic functions and determinantal point processes. 
University Lecture Series, 51. American Mathematical Society, Providence, RI, 2009.

\bibitem{ledoux}
M. Ledoux, The Concentration of Measure Phenomenon. American Mathematical Society, 2001.

\bibitem{litvak} A. Litvak, A. Pajor, M. Rudelson, N. Tomczak-Jaegermann,
 \emph{Smallest singular value of random matrices and geometry of random polytopes}, Adv. Math. \textbf{195} (2005), no. 2, 491--523.

\bibitem{meckes}
M. Meckes, \emph{Concentration of norms and eigenvalues of random matrices}, J. Funct. Anal. \textbf{211} (2004), no. 2, 508-–524. 

\bibitem{meh} M.L. Mehta, Random Matrices and the Statistical Theory of Energy Levels, Academic Press, New York, NY, 1967.

\bibitem{PZ} G. Pan and W. Zhou, \emph{Circular law, Extreme singular values and potential theory}, J. Multivariate Anal. \textbf{101} (2010), no. 3, 645-–656.

\bibitem{peche}
S. Peche, \emph{The largest eigenvalue of small rank perturbations of Hermitian random matrices}, Probab. Theory Related Fields \textbf{134} (2006), no. 1, 127–-173.

\bibitem{peres}
Y. Peres, B. Vir\'ag, \emph{Zeros of the i.i.d. Gaussian power series: a conformally invariant determinantal process}, Acta Math. \textbf{194} (2005), no. 1, 1–35.

\bibitem{prs}
A. Pizzo, D. Renfrew, A. Soshnikov, \emph{On finite rank deformations of Wigner matrices}, preprint.

\bibitem{prosen}
T. Prosen, \emph{Exact statistics of complex zeros for Gaussian random polynomials with real coefficients}, J. Phys. A: Math. Gen. \textbf{29} (1996) 4417–-4423.

\bibitem{rajan}
K. Rajan, L. F. Abbott, \emph{Eigenvalue Spectra of Random Matrices for Neural Networks}, Phys. Rev. Lett. \textbf{97} (2006), 188104.

\bibitem{rudelson}
M. Rudelson, R. Vershynin, \emph{The Littlewood-Offord problem and invertibility of random matrices}, Adv. Math. \textbf{218} (2008), no. 2, 600–-633.

\bibitem{silv}
J. Silverstein, \emph{The spectral radii and norms of large-dimensional non-central random matrices}, Comm. Statist. Stochastic Models 10 (1994), no. 3, 525–532. 

\bibitem{TV-circular} T. Tao and V. Vu, \emph{Random Matrices: The circular Law},
 {Communications in Contemporary Mathematics}, \textbf{10} (2008), 261--307.

\bibitem{lsv} T. Tao and V. Vu, \emph{Smooth analysis of the condition number and the least singular value}, preprint. Available at {\tt http://arxiv.org/abs/0805.3167}.

\bibitem{tv-circular2}
T. Tao, V. Vu, M. Krishnapur, \emph{Random matrices: Universality of ESDs and the circular law}, Ann. Probab. 38 (2010), no. 5, 2023–2065.

\end{thebibliography}
\end{document}